\numberwithin{equation}{section}
\newtheorem{theorem}{Theorem}[section]
\newtheorem{Theorem}{Theorem}[section]
\newtheorem{Lemma}[theorem]{Lemma}
\newtheorem{Proposition}[theorem]{Proposition}
\theoremstyle{definition}
\theoremstyle{definition}
\newtheorem{remark}[theorem]{Remark}
\numberwithin{equation}{section}
\newcommand{\lc}
{\mathrel{\raise2pt\hbox{${\mathop<\limits_{\raise1pt\hbox
{\mbox{$\sim$}}}}$}}}
\newcommand{\gc}
{\mathrel{\raise2pt\hbox{${\mathop>\limits_{\raise1pt\hbox{\mbox{$\sim$}}}}$}}}
\newcommand{\ec}
{\mathrel{\raise2pt\hbox{${\mathop=\limits_{\raise1pt\hbox{\mbox{$\sim$}}}}$}}}
\def\bb{\begin{equation}} \def\ee{\end{equation}}
\def\beqn{\begin{eqnarray}}  \def\eqn{\end{eqnarray}}
\def\beqnx{\begin{eqnarray*}} \def\eqnx{\end{eqnarray*}}
\def\bn{\begin{enumerate}} \def\en{\end{enumerate}}
\def\bd{\begin{description}} \def\ed{\end{description}}
\def\label{\label}
\renewcommand{\leq}{\leqslant}
\renewcommand{\geq}{\geqslant}
\begin{document}

\title{Quantitative unique continuation for parabolic equations with Neumann boundary conditions}

\author{Yueliang Duan, \thanks{Department of Mathematics, Shantou University, Shantou 515063, China;
e-mail: ylduan@stu.edu.cn.}\quad
Lijuan Wang, \thanks{School of
Mathematics and Statistics, Computational Science Hubei Key Laboratory,
Wuhan University, Wuhan 430072, China;
e-mail: ljwang.math@whu.edu.cn. This work was supported by the
National Natural Science Foundation of China under grant 12171377.}
\quad Can Zhang
\thanks{School of Mathematics and Statistics, Wuhan University, Wuhan 430072, China;
e-mail: canzhang@whu.edu.cn. This work was partially supported by NSCF under the grant 11971363.}}

\maketitle

\noindent\textbf{Abstract.}  In this paper, we establish a globally quantitative estimate of unique continuation at one time point for solutions of parabolic equations with Neumann boundary conditions in bounded domains.
Our proof is mainly based on Carleman commutator estimates and a global frequency function argument, which is motivated from a recent work \cite{Buffe}.  As an application,  we obtain an observability inequality from measurable sets in time for all solutions of the above equations.

\medskip

\noindent\textbf{Keywords.} Unique continuation, Observability, Neumann boundary condition
\medskip

\noindent\textbf{Mathematics Subject Classifications (2010).} 49K15,  49K20,  49J15,
 49J20

\section{Introduction}

Let $T>0$ and let  $\Omega\subset \mathbb{R}^N (N\geq1)$ be a bounded connected open set with
 boundary $\partial\Omega$ of class $C^\infty$.  Let the principal part $A(\cdot)$ be a $N\times N$ symmetric matrix with $C^{2}(\overline{\Omega})$ coefficients and satisfy the uniform ellipticity condition, i.e.,  there is a constant $\lambda\geq1$ so that
\begin{equation}\label{yu-11-28-2}
    \lambda^{-1}|\xi|^2\leq A(x)\xi\cdot\xi \leq \lambda|\xi|^2\;\;\text{for all}\;\;x\in\mathbb R^N\;\;\text{and}\;\;\xi\in\mathbb{R}^N.
\end{equation}
We consider the following linear parabolic equation with the homogeneous Neumann boundary condition:
\begin{equation}\label{1.1}
\left\{ \begin{array}{lll}
\partial_{t}u-\mathrm{div}(A\nabla u)+B\cdot\nabla u+au=0&\mathrm{in}\  \Omega\times (0,T),\\
A\nabla u\cdot \overrightarrow{n}=0&\mathrm{on}\  \partial\Omega\times (0,T),\\
u(\cdot,0)\in L^2(\Omega).\\
\end{array}\right.\end{equation}
Here, the potentials $a\in L^\infty(\Omega\times (0,T))$, $B\in (L^\infty(\Omega\times (0,T)))^{N}$, and $\overrightarrow{n}$ is the unit outward normal vector to $\partial\Omega$. According to \cite[Theorem 10.9]{Brezis} and \cite[Theorem 4.3]{Barbu}, the equation \eqref{1.1} has a unique solution $u\in L^2(0,T;H^1(\Omega))\cap C([0,T]; L^2(\Omega))$. Moveover, for each $\delta \in (0, T)$, $u\in C([\delta,T]; H^1(\Omega))$.

Throughout the paper, we denote the usual inner product and norm in $(L^2(\Omega))^k (k\geq 1)$  by $\langle\cdot,\cdot\rangle$ and $\|\cdot\|$, respectively; $\| a\|_{\infty} := \| a\|_{L^{\infty}(\Omega \times (0,T))}$; $\|B\|_{\infty} := \| B\|_{(L^{\infty}(\Omega \times (0,T)))^N}$; $C(\cdot )$ denotes a generic positive constant depending on what are enclosed in the brackets. We shall occasionally
use the sum index convention.

The main theorems of this paper concerning the quantitative estimate of unique continuation, as well as the observability inequality, for all solutions of \eqref{1.1} can be stated as follows.

\begin{Theorem}\label{jiudu4}
Let $\widetilde{\omega}$ be a non-empty open subset in $\Omega$. Then, there are constants  $\mathcal{K}:=\mathcal{K}(A, \Omega, \widetilde{\omega})>0$ and $\beta:=\beta(A, \Omega, \widetilde{\omega})\in(0,1)$ so that for any $t\in(0,T]$ and any $u(\cdot,0)\in L^{2}(\Omega)$, the corresponding solution to \eqref{1.1} satisfies
\begin{eqnarray*}
\int_{\Omega}|u(x,t)|^{2}\mathrm{d}x\leq e^{\mathcal{K}\left[1+\frac{1}{t}+\|a\|_{\infty}^{2/3}+\|B\|_{\infty}^{2}+t\left(\|a\|_{\infty}+\|B\|^{2}_{\infty}\right)\right]}\left(\int_{\Omega}|u(x,0)|^{2}\mathrm{d}x\right)^{\beta}\left(\int_{\widetilde{\omega}} |u(x,t)|^{2}\mathrm{d}x\right)^{1-\beta}.
\end{eqnarray*}
\end{Theorem}

\medskip

\begin{Theorem}\label{Thm1}
Let $E$ be a subset of positive Lebesgue measure in $(0,T)$, and let $\omega$ be a non-empty open subset of $\Omega$. Then there exist two constants $\widetilde{\mathcal{K}}_1:=\widetilde{\mathcal{K}}_1(E, A, \Omega, \omega)>0$
and $\widetilde{\mathcal{K}}_2:=\widetilde{\mathcal{K}}_2(A,\Omega,\omega)>0$ so that for any $u(\cdot,0)\in L^{2}(\Omega)$, the corresponding  solution of \eqref{1.1} satisfies
	\begin{equation*}
	\left(\int_{\Omega}|u(x,T)|^2\mathrm{d}x\right)^{1/2}\leq e^{\widetilde{\mathcal{K}}_1} e^{\widetilde{\mathcal{K}}_2\left[\|a\|_{\infty}^{2/3}+\|B\|_{\infty}^{2}+T\left(\|a\|_{\infty}+\|B\|^{2}_{\infty}\right)\right]}\int_{\omega\times E}|u(x,t)|\mathrm{d}x\mathrm{d}t.
\end{equation*}
\end{Theorem}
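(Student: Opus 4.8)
The plan is to deduce Theorem~\ref{Thm1} from the one-point interpolation inequality of Theorem~\ref{jiudu4} by a telescoping-series argument over the Lebesgue density points of $E$, in the spirit of the measurable-in-time observability arguments of Phung--Wang and Apraiz--Escauriaza--Lebeau--Zhang. Abbreviate $\Lambda:=\|a\|_\infty^{2/3}+\|B\|_\infty^2+T(\|a\|_\infty+\|B\|_\infty^2)$. First I would assemble three ingredients. (i) A two-time version of Theorem~\ref{jiudu4}: viewing the solution as issuing from a time $t_1$ and evaluated at $t_2$ (a time translation of \eqref{1.1}), and crudely replacing $t_2$ by $T$ in the time-weighted potential term, one gets for $0\le t_1<t_2\le T$ and any open $\widetilde\omega\subset\Omega$ that $\|u(t_2)\|\le e^{\mathcal K[1+(t_2-t_1)^{-1}+\Lambda]}\|u(t_1)\|^{\beta}\|u(t_2)\|_{L^2(\widetilde\omega)}^{1-\beta}$. (ii) A forward dissipation estimate: testing \eqref{1.1} with $u$ and using \eqref{yu-11-28-2} and Young's inequality gives $\frac{\mathrm{d}}{\mathrm{d}t}\|u\|^2\le(\tfrac{\lambda}{2}\|B\|_\infty^2+2\|a\|_\infty)\|u\|^2$, hence $\|u(t_2)\|\le e^{C(t_2-t_1)(\|a\|_\infty+\|B\|_\infty^2)}\|u(t_1)\|$ for $t_1\le t_2$; so $t\mapsto\|u(t)\|$ is almost non-increasing on bounded intervals. (iii) An $L^1$-in-space upgrade of the observation term: fixing open sets $\widetilde\omega\Subset\widehat\omega\Subset\omega$ and using interior parabolic regularity (De~Giorgi--Nash--Moser local boundedness on the cylinder $\widehat\omega\times(t_1,t_2)$, valid since $\widehat\omega$ stays away from $\partial\Omega$) together with (ii) to control $\|u(t_2)\|_{L^\infty(\widetilde\omega)}$ by $\|u(t_1)\|$, one converts the inequality in (i) into
\begin{equation*}
\|u(t_2)\|\le e^{\mathcal K'[1+(t_2-t_1)^{-1}+\Lambda]}(t_2-t_1)^{-\gamma}\,\|u(t_1)\|^{1-\delta}\Big(\int_{\widetilde\omega}|u(x,t_2)|\,\mathrm{d}x\Big)^{\delta}
\end{equation*}
for some $\delta\in(0,1)$, $\gamma>0$ depending only on $A,\Omega,\omega,N$.

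With these in hand, I would run the telescoping. By Young's inequality the displayed inequality gives, for every $\epsilon>0$, $\|u(t_2)\|\le\epsilon\|u(t_1)\|+\epsilon^{-(1-\delta)/\delta}e^{\mathcal K''[1+(t_2-t_1)^{-1}+\Lambda]}(t_2-t_1)^{-\gamma'}\int_{\widetilde\omega}|u(x,t_2)|\,\mathrm{d}x$. Pick a density point $\ell\in(0,T)$ of $E$ and then $z\in(\ell,T)$ with $z-\ell$ small enough that, for a suitable ratio $\lambda_0>1$ close to $1$ (dictated by $\delta$), the geometric sequence $z_m:=\ell+\lambda_0^{-m}(z-\ell)$ satisfies $|E\cap(z_{m+1},z_m)|\ge\tfrac13(z_m-z_{m+1})$ for all $m\ge0$. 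For $\tau\in E\cap(z_{m+1},z_m)$, apply the $\epsilon$-inequality with times $(z_{m+2},\tau)$ --- so the singular factor is bounded by a power of $z_{m+1}-z_{m+2}$ --- then use (ii) to pass from time $\tau$ up to $z_m$, and average in $\tau$; this yields a recursion $\|u(z_m)\|\le\epsilon_m\,a_m\|u(z_{m+2})\|+D_m\int_{\widetilde\omega\times(E\cap(z_{m+1},z_m))}|u|\,\mathrm{d}x\,\mathrm{d}t$, where $a_m=e^{C(z_m-z_{m+2})(\|a\|_\infty+\|B\|_\infty^2)}$ and $D_m$ is an explicit constant carrying a factor $\epsilon_m^{-(1-\delta)/\delta}$, a geometric-in-$m$ factor from $1/(z_{m+1}-z_{m+2})$ and $1/|E\cap(z_{m+1},z_m)|$, and a potential factor bounded by $e^{C\Lambda}$ uniformly in $m$. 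Choosing $\epsilon_m$ to decay super-geometrically and iterating the recursion down a sub-sequence of indices, the tail $\prod_m\epsilon_m a_m$ vanishes (using (ii) to bound $\|u(z_m)\|$ crudely), while the disjoint observation integrals sum to at most $\int_{\omega\times E}|u|\,\mathrm{d}x\,\mathrm{d}t$; this bounds $\|u(z_1)\|$. Finally, propagating from $z_1$ up to $T$ by (ii) gives the claimed inequality.

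The main obstacle is the constant bookkeeping in the telescoping step: one must tune the density parameter $\lambda_0$ (constrained from below by the density of $E$ at $\ell$ and from above, roughly by $(1-\delta)^{-1/2}$, so that the geometric growth can be beaten) and the weights $\epsilon_m$ so that (a) the singular factors of $D_m$, which grow geometrically in $m$, are killed by $\prod_{j<m}\epsilon_j$ and sum to a finite constant depending on $E,A,\Omega,\omega$ --- this becomes $\widetilde{\mathcal K}_1$ --- and (b) the potential-dependent factors, which are uniformly bounded by $e^{C\Lambda}$ with $C=C(A,\Omega,\omega)$, factor out of the sum and give exactly $e^{\widetilde{\mathcal K}_2\Lambda}$. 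Here one uses the elementary but essential observation that in the local-regularity step (iii), and in the dissipation factors $a_m$, the quantities $\|a\|_\infty$ and $\|B\|_\infty^2$ always occur multiplied by a time increment $\le T$, hence are absorbed into $T(\|a\|_\infty+\|B\|_\infty^2)\le\Lambda$, which is what keeps $\widetilde{\mathcal K}_2$ free of any dependence on $E$ and $T$. By comparison, the energy estimate (ii), the Young-inequality splitting, the $L^2\!\to\!L^1$-in-space reduction, and the existence of a density point of $E$ are all routine.
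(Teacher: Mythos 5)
Your proposal is correct and, in its skeleton, is the same proof as the paper's: a two-time $\varepsilon$-form of Theorem~\ref{jiudu4} obtained by time translation plus Young's inequality, an upgrade of the spatial observation from $L^2(\widetilde\omega)$ to $L^1(\omega)$, and then the standard telescoping series over a geometric sequence accumulating at a density point of $E$ with $|E\cap(\ell_{m+1},\ell_m)|\geq\tfrac13(\ell_m-\ell_{m+1})$, super-geometrically decaying weights (the paper takes $\varepsilon=e^{-\widetilde d\kappa^{m+2}}$ and fixes $\kappa=\sqrt{(\gamma+2)/(\gamma+1)}$, where you leave the ratio and weights implicit), propagation from the observation time $t$ up to $\ell_m$ by the energy estimate of Lemma~\ref{lemma-1.1}, and a final propagation to $T$. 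The only genuine difference is the $L^2\to L^1$ reduction: the paper uses Nash's inequality \eqref{2019-7-9090} with a cutoff and absorbs the resulting gradient term via the smoothing estimate of Lemma~\ref{lemma-1.2}, so everything stays within the elementary energy estimates of the appendix and the potential dependence of each constant ($\mathcal K_5,\mathcal K_7$) is explicit; you instead invoke interior De~Giorgi--Nash--Moser boundedness on a cylinder $\widehat\omega\times(t_1,t_2)$ and interpolate $\|u\|_{L^2(\widetilde\omega)}^2\leq\|u\|_{L^\infty(\widetilde\omega)}\|u\|_{L^1(\widetilde\omega)}$. That route also works, at the cost of importing regularity theory whose constant's dependence on $a,B$ must be tracked; your remark that under parabolic scaling the lower-order coefficients enter only through $r\|B\|_\infty+r^2\|a\|_\infty$ with $r^2\leq t_2-t_1\leq T$, hence are absorbed into $T(\|a\|_\infty+\|B\|_\infty^2)$, is precisely what keeps this admissible, and it mirrors the paper's key bookkeeping point that the coefficient of $1/(t_2-t_1)$ in the exponent ($\mathcal K_9$) is potential-free, so the iterated singular factors feed only the $E$-dependent constant $\widetilde{\mathcal K}_1$ while the uniformly bounded $e^{C\Lambda}$ factors give $\widetilde{\mathcal K}_2$ independent of $E$. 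The telescoping "bookkeeping" you flag as the main obstacle is exactly what the paper carries out with the explicit choices above, so no new idea is needed there.
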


\medskip

Several remarks are given in order.

\begin{remark}
When the principal part $A(\cdot)\equiv I$ (the $N\times N$ order identity matrix) and the potential $B=0$ in the equation \eqref{1.1},  a similar result to Theorem \ref{jiudu4}  has been recently established in \cite{Buffe}. Here we extend the result to the more general case by adapting the method used in \cite{Buffe}.
\end{remark}

\begin{remark}
In the particular case that $E=(0,T)$,  the constant $\widetilde{\mathcal{K}}_1(E, A, \Omega, \omega)$ appearing  in Theorem \ref{Thm1}  could be taken the form of $\widetilde{\mathcal{K}}_1(A, \Omega, \omega)/T$. Indeed, the observability constant obtained here has the same optimal dependence on the $L^\infty$-norm of potentials $a$ and $B$ as those in the nice work \cite{xz}.
\end{remark}

\begin{remark}
For the more general case that  the principle part $A(\cdot)$ in the equation \eqref{1.1} depend on both the time and spatial variables, similar results could be obtained by using the method of the proof we developed here, as well as the argument in  \cite[Theorem 4.1]{Claude}.
\end{remark}

The interpolation inequality at one time point as stated in Theorem \ref{jiudu4}  is a globally
quantitative version of unique continuation for solutions of parabolic equations.
The investigate of unique continuation properties  for solutions of parabolic-type partial differential equations has a very long history. Giving an exhaustive bibliography on the subject is by far beyond the scope of the present paper, but the reader can consult the survey article \cite{Ves} and the literature cited there.

It is worthing to mention that the same type of interpolation inequality as in Theorem \ref{jiudu4} has been obtained in \cite{Claude, Wang0,Phung-Wang,Phung-Wang-Zhang} for solutions of parabolic equations with the zero Dirichlet boundary condition, by using a (local)-parabolic-type frequency function method developed in \cite{iaza}.  To the best of our knowledge, the arguments in the local nature of these works are not valid in our present case because of different boundary conditions.

The observability inequality on measurable subsets for parabolic
equations was previously studied in a large number of publications. When $E$ is the whole time interval and the observation region $\omega$ is a non-empty open subset, we refer the reader to the classical monograph \cite{FI}. The approach is mainly based on the method of a globally Carleman estimate, which involves with an exponential weight function.  When $E$ is only a subset of positive Lebesgue measure in the time interval
and the observation region $\omega$ is a non-empty open subset, we refer the reader to
a series of recent works \cite{Apraiz-Escauriaza-Wang-Zhang,Phung-Wang, Phung-Wang-Zhang,wang-zhang1} for the observability inequality for parabolic-type evolution systems.

We would like to stress that the observability estimate from measurable
sets in the time variable established as in Theorem \ref{Thm1} has several applications
in control theory.  In particular, it implies bang-bang properties of minimal norm and minimal time optimal control problems (see, for instance, \cite{Phung-Wang}).

The first effort of this paper is to prove a quantitative estimate of unique continuation for parabolic equations by using a frequency function method, which is borrowed  from \cite{Buffe}. Secondly, we utilize  a telescoping method to prove an observability inequality for the same equations.

The rest of this paper is organized as follows. In Section \ref{sc2}, we show several auxiliary lemmas, whose proofs are provided in Appendix.  Sections \ref{kaodu3}
 and  \ref{finalproof} prove Theorems~\ref{jiudu4} and \ref{Thm1}, respectively.

\section{Preliminaries}\label{sc2}

First of all, we give two standard energy estimates for solutions of \eqref{1.1}. For the sake of completeness we provide their detailed proofs in the appendix.

\begin{Lemma}\label{lemma-1.1}
For all $t\in [0,T]$, the solution $u$ of \eqref{1.1} satisfies
\begin{eqnarray}\label{1.2}
\int_{\Omega}| u(x,t)|^{2}\mathrm{d}x+\lambda^{-1}\int_{0}^{t}\int_{\Omega}|\nabla u|^{2}\mathrm{d}x\mathrm{d}s
\leq  e^{t\left(2\|a\|_{\infty}+\lambda\|B\|^{2}_{\infty}\right)
}\int_{\Omega}|u(x,0)|^{2}\mathrm{d}x.
\end{eqnarray}
\end{Lemma}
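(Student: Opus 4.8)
The plan is to prove the standard energy estimate \eqref{1.2} by multiplying the equation by $u$ and integrating by parts over $\Omega$, carefully handling the Neumann boundary condition so that no boundary term survives, and then controlling the lower-order terms with Cauchy--Schwarz and Gr\"onwall's inequality.

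First I would take the (sufficiently smooth) solution $u$ of \eqref{1.1}, multiply the PDE $\partial_t u - \mathrm{div}(A\nabla u) + B\cdot\nabla u + au = 0$ by $u$, and integrate over $\Omega$. The time-derivative term gives $\frac12\frac{d}{dt}\int_\Omega |u|^2\,\mathrm dx$. Integrating the second term by parts produces $\int_\Omega A\nabla u\cdot\nabla u\,\mathrm dx - \int_{\partial\Omega} (A\nabla u\cdot\overrightarrow{n})\,u\,\mathrm d\sigma$; the boundary integral vanishes precisely because of the homogeneous Neumann condition $A\nabla u\cdot\overrightarrow n = 0$ on $\partial\Omega\times(0,T)$. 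By the ellipticity assumption \eqref{yu-11-28-2}, $\int_\Omega A\nabla u\cdot\nabla u\,\mathrm dx \geq \lambda^{-1}\int_\Omega |\nabla u|^2\,\mathrm dx$. This yields
\begin{equation*}
\frac12\frac{d}{dt}\int_\Omega |u|^2\,\mathrm dx + \lambda^{-1}\int_\Omega |\nabla u|^2\,\mathrm dx \leq \left|\int_\Omega (B\cdot\nabla u)\,u\,\mathrm dx\right| + \left|\int_\Omega a|u|^2\,\mathrm dx\right|.
\end{equation*}

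Next I would estimate the two terms on the right. The potential term is bounded by $\|a\|_\infty \int_\Omega |u|^2\,\mathrm dx$. For the drift term, by Cauchy--Schwarz and Young's inequality with parameter tuned to the ellipticity constant,
\begin{equation*}
\left|\int_\Omega (B\cdot\nabla u)\,u\,\mathrm dx\right| \leq \|B\|_\infty \int_\Omega |\nabla u|\,|u|\,\mathrm dx \leq \lambda^{-1}\int_\Omega |\nabla u|^2\,\mathrm dx + \frac{\lambda}{4}\|B\|_\infty^2 \int_\Omega |u|^2\,\mathrm dx.
\end{equation*}
Absorbing the gradient term on the left gives $\frac{d}{dt}\int_\Omega |u|^2\,\mathrm dx + 2\lambda^{-1}\int_\Omega|\nabla u|^2\,\mathrm dx \leq \bigl(2\|a\|_\infty + \tfrac{\lambda}{2}\|B\|_\infty^2\bigr)\int_\Omega |u|^2\,\mathrm dx$ (one can be slightly wasteful with constants to land exactly on $2\|a\|_\infty + \lambda\|B\|_\infty^2$ as in the statement). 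Then I would apply Gr\"onwall's inequality to the function $y(t) = \int_\Omega |u(x,t)|^2\,\mathrm dx$ to conclude $y(t) \leq e^{t(2\|a\|_\infty + \lambda\|B\|_\infty^2)} y(0)$, and integrate the differential inequality once more over $[0,t]$ to recover the $\lambda^{-1}\int_0^t\int_\Omega |\nabla u|^2$ term with the same exponential factor on the right, which is exactly \eqref{1.2}.

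I do not expect any genuine obstacle here; this is a textbook parabolic energy estimate. The only points requiring a little care are: (i) justifying the integration by parts and the identity $\frac12\frac{d}{dt}\|u\|^2 = \langle \partial_t u, u\rangle$ rigorously at the stated regularity $u\in L^2(0,T;H^1(\Omega))\cap C([0,T];L^2(\Omega))$ — this is handled by the standard density/Lions--Magenes argument, or by first working with smooth data and passing to the limit; (ii) making sure the Neumann boundary term is exactly zero, which it is by the boundary condition in \eqref{1.1}; and (iii) bookkeeping the constants so the final exponent matches $2\|a\|_\infty + \lambda\|B\|_\infty^2$ precisely. Since the paper defers the details to the appendix, in the main text I would simply record the computation above and refer forward.
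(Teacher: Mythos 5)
Your proposal is correct and follows essentially the same route as the paper: multiply by $u$, integrate by parts (the Neumann condition annihilates the boundary term), use the ellipticity \eqref{yu-11-28-2} and Young's inequality, then Gr\"onwall --- the paper simply works in time-integrated form and applies Gr\"onwall to the combined quantity $\|u(\cdot,t)\|^2+\lambda^{-1}\int_0^t\int_\Omega|\nabla u|^2$, which is equivalent to your differential-form argument followed by one more integration. One bookkeeping correction: with your Young parameter ($\|B\|_\infty|\nabla u||u|\leq \lambda^{-1}|\nabla u|^2+\tfrac{\lambda}{4}\|B\|_\infty^2|u|^2$) the entire coercive gradient term is consumed, so the displayed inequality retaining $2\lambda^{-1}\int_\Omega|\nabla u|^2$ on the left does not follow as written; choose instead $\|B\|_\infty|\nabla u||u|\leq \tfrac{1}{2\lambda}|\nabla u|^2+\tfrac{\lambda}{2}\|B\|_\infty^2|u|^2$ (which is what the paper's estimate amounts to), so that $\lambda^{-1}\int_\Omega|\nabla u|^2$ survives after doubling and the constant lands exactly on $2\|a\|_\infty+\lambda\|B\|_\infty^2$.
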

\begin{Lemma}\label{lemma-1.2}
For all $t\in(0,T]$, the solution $u$ of \eqref{1.1} satisfies
\begin{eqnarray}\label{1.3}
\int_{\Omega}|\nabla u(x,t)|^{2}\mathrm{d}x\leq\frac{2\lambda^{3}}{t} e^{t\left[3\|a\|_{\infty}+2\lambda\|B\|^{2}_{\infty}\right]}\int_{\Omega}|u(x,0)|^{2}\mathrm{d}x.
\end{eqnarray}
\end{Lemma}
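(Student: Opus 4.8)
The plan is to establish \eqref{1.3} by a $t$-weighted energy estimate, with Lemma~\ref{lemma-1.1} as the main input. Put $\phi(t):=\int_{\Omega}|u(x,t)|^{2}\,\mathrm{d}x$ and $\psi(t):=\int_{\Omega}A\nabla u(x,t)\cdot\nabla u(x,t)\,\mathrm{d}x$; by the ellipticity condition \eqref{yu-11-28-2} one has $\lambda^{-1}\|\nabla u(t)\|^{2}\le\psi(t)\le\lambda\|\nabla u(t)\|^{2}$, so it suffices to bound $\psi(t)$. First I would test \eqref{1.1} against $t\,\partial_{t}u$ and integrate over $\Omega$. Integrating the second-order term by parts, the boundary contribution $t\int_{\partial\Omega}(A\nabla u\cdot\overrightarrow{n})\,\partial_{t}u$ vanishes thanks to the Neumann condition in \eqref{1.1}, and since $A=A(x)$ does not depend on $t$ the remaining piece equals $\tfrac{t}{2}\tfrac{\mathrm{d}}{\mathrm{d}t}\psi(t)=\tfrac{1}{2}\tfrac{\mathrm{d}}{\mathrm{d}t}\big(t\psi(t)\big)-\tfrac{1}{2}\psi(t)$. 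This produces the identity
\[
t\|\partial_{t}u\|^{2}+\tfrac{1}{2}\tfrac{\mathrm{d}}{\mathrm{d}t}\big(t\psi(t)\big)-\tfrac{1}{2}\psi(t)+t\,\langle B\cdot\nabla u+au,\,\partial_{t}u\rangle=0 .
\]

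Next I would absorb the dissipation: by the Cauchy--Schwarz and Young inequalities $-t\,\langle B\cdot\nabla u+au,\partial_{t}u\rangle\le t\|\partial_{t}u\|^{2}+\tfrac{t}{4}\big(\|B\|_{\infty}\|\nabla u\|+\|a\|_{\infty}\|u\|\big)^{2}$, so that the two $t\|\partial_{t}u\|^{2}$ terms cancel; then, using $(x+y)^{2}\le 2x^{2}+2y^{2}$ together with $\|\nabla u\|^{2}\le\lambda\psi$, one arrives at the differential inequality
\[
\tfrac{\mathrm{d}}{\mathrm{d}t}\big(t\psi(t)\big)\le\big(1+\lambda t\|B\|_{\infty}^{2}\big)\,\psi(t)+t\,\|a\|_{\infty}^{2}\,\phi(t),\qquad t\in(0,T].
\]
I would integrate this over $(0,t)$. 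The left endpoint is harmless because $\int_{0}^{T}\psi(s)\,\mathrm{d}s<\infty$ by Lemma~\ref{lemma-1.1}, which forces $\liminf_{s\downarrow 0}s\psi(s)=0$, so integrating over $[s_{n},t]$ with $s_{n}\psi(s_{n})\to0$ and letting $n\to\infty$ is legitimate. Writing $c:=2\|a\|_{\infty}+\lambda\|B\|_{\infty}^{2}$ and using the two consequences $\int_{0}^{t}\psi(s)\,\mathrm{d}s\le\lambda^{2}e^{ct}\phi(0)$ and $\phi(s)\le e^{cs}\phi(0)$ of Lemma~\ref{lemma-1.1}---the latter combined with $c\ge2\|a\|_{\infty}$ to give $\int_{0}^{t}s\|a\|_{\infty}^{2}\phi(s)\,\mathrm{d}s\le\tfrac{t}{2}\|a\|_{\infty}e^{ct}\phi(0)$---one obtains
\[
t\,\psi(t)\le e^{ct}\phi(0)\Big(\lambda^{2}+\lambda^{3}t\|B\|_{\infty}^{2}+\tfrac{t}{2}\|a\|_{\infty}\Big).
\]

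The remaining step is purely elementary: dividing by $t$ and using $\|\nabla u(t)\|^{2}\le\lambda\psi(t)$, it is enough to verify $\lambda^{3}+\lambda^{4}t\|B\|_{\infty}^{2}+\tfrac{\lambda t}{2}\|a\|_{\infty}\le\lambda^{3}e^{t(\lambda\|B\|_{\infty}^{2}+\|a\|_{\infty})}$, which follows from $\lambda\ge1$ and $1+x\le e^{x}$; multiplying by the factor $e^{ct}$ then yields exactly the exponent $3\|a\|_{\infty}+2\lambda\|B\|_{\infty}^{2}$ and the constant $\lambda^{3}\le2\lambda^{3}$, i.e.\ \eqref{1.3}. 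I expect the only genuinely delicate point to be the regularity justification in the first step: testing against $\partial_{t}u$ presupposes $\partial_{t}u(t)\in L^{2}(\Omega)$ for $t>0$ (equivalently $u(t)\in H^{2}(\Omega)$, guaranteed by parabolic smoothing under the present hypotheses), and the displayed identity should be made rigorous either through a Galerkin scheme or by first working on $[\varepsilon,t]$ and letting $\varepsilon\downarrow0$, using the near-$0$ integrability of $\psi$ from Lemma~\ref{lemma-1.1}; everything else is routine differential-inequality bookkeeping.
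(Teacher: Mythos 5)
Your argument is correct and reaches exactly the stated constant $2\lambda^{3}e^{t(3\|a\|_{\infty}+2\lambda\|B\|_{\infty}^{2})}$; it is a close cousin of the paper's proof rather than a different method. The paper multiplies the equation by $-t\,\mathrm{div}(A\nabla u)$, so the dissipative square term is $\int s\,[\mathrm{div}(A\nabla u)]^{2}$, the cross term again produces $\tfrac{\mathrm{d}}{\mathrm{d}t}\big(t\int_\Omega A\nabla u\cdot\nabla u\big)$ via the Neumann condition, the lower-order terms are absorbed by Young exactly as in your scheme, and the remaining $\|B\|_{\infty}^{2}\int_{0}^{t}s\|\nabla u\|^{2}$ is handled by Gronwall after invoking Lemma~\ref{lemma-1.1}; your version uses the dual multiplier $t\,\partial_{t}u$ (so the absorbed square is $t\|\partial_{t}u\|^{2}$), and replaces the final Gronwall step by the cruder bound $s\leq t$ together with $\int_{0}^{t}\psi\leq\lambda^{2}e^{ct}\|u(0)\|^{2}$ and $1+x\leq e^{x}$, which happens to land on the same exponent. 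The two multipliers impose the same regularity demand ($u(t)\in H^{2}$, $\partial_{t}u(t)\in L^{2}$ for $t>0$), which the paper also uses tacitly, and your handling of the endpoint $t\to0$ via $\liminf_{s\downarrow0}s\psi(s)=0$ (forced by the integrability of $\|\nabla u\|^{2}$ from Lemma~\ref{lemma-1.1}) is a legitimate substitute for the paper's integration over $(0,t)$ from the start; so the only real differences are cosmetic, and both arguments buy the same $1/t$ blow-up rate with identical dependence on $\|a\|_{\infty}$ and $\|B\|_{\infty}$.
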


The next result is concerned with  some identities linked to the Carleman commutator.
\begin{Proposition}\label{lemma-2}
Let $$\Phi(x,t):=\frac{s\varphi(x)}{\Gamma(t)}, \ s>0, \ \Gamma(t):=T-t+h, \ h>0, \ \varphi\in C^{\infty}(\overline{\Omega}).$$
For any $f\in H^2(\Omega)$, let

\begin{equation}\label{absd8}
\left\{
\begin{array}{l}
\mathcal{A}_{\varphi}f:=-A\nabla\Phi\cdot\nabla f-\frac{1}{2}\mathrm{div}(A\nabla\Phi)f,\\
\mathcal{S}_{\varphi}f:=-\mathrm{div}(A\nabla f)-\eta f, \ where\ \eta:=\frac{1}{2}\partial_{t}\Phi+\frac{1}{4}A\nabla\Phi\cdot\nabla\Phi,\\
\mathcal{S'}_{\varphi}f:=-\partial_{t}\eta f.
\end{array}
\right.
\end{equation}
Then, we have 
\begin{description}
\item[$(i)$] $$\int_{\Omega}\mathcal{A}_{\varphi}f f\mathrm{d}x=-\frac{1}{2}\int_{\partial\Omega}A\nabla\Phi\cdot \overrightarrow{n} f^2 \mathrm{d}S;$$

\item[$(ii)$] $$\int_{\Omega}\mathcal{S}_{\varphi}f f\mathrm{d}x=\int_{\Omega}A\nabla f\cdot\nabla f\mathrm{d}x-\int_{\Omega}\eta f^2\mathrm{d}x-\int_{\partial\Omega}A\nabla f\cdot \overrightarrow{n} f \mathrm{d}S;$$

\item[$(iii)$] \begin{eqnarray*}
&&\int_{\Omega}\mathcal{S'}_{\varphi}f f\mathrm{d}x+2\int_{\Omega}\mathcal{S}_{\varphi}f \mathcal{A}_{\varphi}f\mathrm{d}x\\
&=&2\int_{\partial\Omega}(A\nabla f\cdot \overrightarrow{n})( A\nabla\Phi\cdot\nabla f)\mathrm{d}S-\int_{\partial\Omega}(A\nabla \Phi \cdot \overrightarrow{n}) (A\nabla f\cdot\nabla f)\mathrm{d}S\\
&&+\int_{\partial\Omega}(A\nabla f\cdot \overrightarrow{n})\mathrm{div}(A\nabla \Phi) f\mathrm{d}S+\int_{\partial\Omega}(A\nabla \Phi \cdot \overrightarrow{n})\eta f^2\mathrm{d}S\\
&&-2\int_{\Omega}A_{ij}\partial_{x_{j}}f\partial_{x_{i}}A_{kl}\partial_{x_{l}}f\partial_{x_{k}}\Phi \mathrm{d}x+\int_{\Omega}\partial_{x_{l}}A_{ij}\partial_{x_{j}}fA_{kl}\partial_{x_{i}}f\partial_{x_{k}}\Phi \mathrm{d}x  \\
&&-2\int_{\Omega}A\nabla^2 \Phi A\nabla f\cdot \nabla f \mathrm{d}x-\int_{\Omega}A\nabla f\cdot\nabla\left[\mathrm{div}(A\nabla \Phi)\right]f\mathrm{d}x\\
&&-\frac{2}{\Gamma}\int_{\Omega} \left(\eta+\frac{1}{4}A\nabla\Phi\cdot\nabla\Phi+\frac{s}{8}A_{ij}\partial_{x_{j}}\varphi\partial_{x_{i}}A_{kl}\partial_{x_{l}}\Phi\partial_{x_{k}}\Phi+\frac{s}{4}A\nabla^{2}\varphi A\nabla\Phi\cdot\nabla\Phi\right)  f^2 \mathrm{d}x.
\end{eqnarray*}
\end{description}
\end{Proposition}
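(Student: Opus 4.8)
The proof of $(i)$ and $(ii)$ is a one-line application of the divergence theorem. For $(i)$, write $(A\nabla\Phi\cdot\nabla f)f=\tfrac12 A\nabla\Phi\cdot\nabla(f^2)$ and integrate by parts; the resulting interior term $\tfrac12\int_\Omega\mathrm{div}(A\nabla\Phi)f^2\,\mathrm{d}x$ cancels exactly the contribution of the zeroth-order part $-\tfrac12\mathrm{div}(A\nabla\Phi)f$ of $\mathcal{A}_\varphi f$, leaving only the stated boundary integral. For $(ii)$, apply Green's formula to $\int_\Omega -\mathrm{div}(A\nabla f)\,f\,\mathrm{d}x$ and keep the term $-\int_\Omega\eta f^2\,\mathrm{d}x$ unchanged. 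All integrations by parts are legitimate since $f\in H^2(\Omega)$ and $\partial\Omega$ is smooth.

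The core is $(iii)$. First I would exploit the product structure $\Phi=s\varphi/\Gamma$, $\Gamma'=-1$, which gives $\partial_t\Phi=\Phi/\Gamma$, $\partial_t(A\nabla\Phi\cdot\nabla\Phi)=\tfrac2\Gamma A\nabla\Phi\cdot\nabla\Phi$, hence $\partial_t\eta=\tfrac2\Gamma\eta$, so that $\int_\Omega\mathcal{S}'_\varphi f\,f\,\mathrm{d}x=-\tfrac2\Gamma\int_\Omega\eta f^2\,\mathrm{d}x$, which already accounts for the $\eta$ piece of the last term on the right-hand side. Next I would expand
\[
2\,\mathcal{S}_\varphi f\,\mathcal{A}_\varphi f = 2\,\mathrm{div}(A\nabla f)(A\nabla\Phi\cdot\nabla f) + \mathrm{div}(A\nabla f)\,\mathrm{div}(A\nabla\Phi)\,f + 2\eta f\,(A\nabla\Phi\cdot\nabla f) + \eta\,\mathrm{div}(A\nabla\Phi)\,f^2
\]
and treat the four terms in turn. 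The first term is a variable-coefficient Rellich--Ne\v{c}as computation: integrating by parts to move $\partial_{x_i}$ off $A_{ij}\partial_{x_j}f$ produces the boundary term $2\int_{\partial\Omega}(A\nabla f\cdot\overrightarrow{n})(A\nabla\Phi\cdot\nabla f)\,\mathrm{d}S$ and three interior integrals; the one carrying $\partial_{x_i}\partial_{x_l}\Phi$ is $-2\int_\Omega A\nabla^2\Phi\,A\nabla f\cdot\nabla f\,\mathrm{d}x$, the one carrying $\partial_{x_i}A_{kl}$ becomes $-2\int_\Omega A_{ij}\partial_{x_j}f\,\partial_{x_i}A_{kl}\,\partial_{x_l}f\,\partial_{x_k}\Phi\,\mathrm{d}x$ after relabelling dummy indices using $A_{kl}=A_{lk}$, and the one carrying $\partial_{x_i}\partial_{x_k}f$ is symmetrized via $A_{ij}\partial_{x_j}f\,\partial_{x_i}\partial_{x_k}f=\tfrac12\partial_{x_k}(A\nabla f\cdot\nabla f)-\tfrac12\partial_{x_k}A_{ij}\,\partial_{x_j}f\,\partial_{x_i}f$ and integrated by parts once more, producing the boundary term $-\int_{\partial\Omega}(A\nabla\Phi\cdot\overrightarrow{n})(A\nabla f\cdot\nabla f)\,\mathrm{d}S$, the interior term $+\int_\Omega\partial_{x_l}A_{ij}\,\partial_{x_j}f\,A_{kl}\,\partial_{x_i}f\,\partial_{x_k}\Phi\,\mathrm{d}x$, and a leftover $+\int_\Omega\mathrm{div}(A\nabla\Phi)(A\nabla f\cdot\nabla f)\,\mathrm{d}x$. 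The second term, integrated by parts once against the scalar $\mathrm{div}(A\nabla\Phi)$, yields $\int_{\partial\Omega}(A\nabla f\cdot\overrightarrow{n})\mathrm{div}(A\nabla\Phi)f\,\mathrm{d}S$, the term $-\int_\Omega A\nabla f\cdot\nabla[\mathrm{div}(A\nabla\Phi)]f\,\mathrm{d}x$, and $-\int_\Omega\mathrm{div}(A\nabla\Phi)(A\nabla f\cdot\nabla f)\,\mathrm{d}x$, which cancels the leftover above. For the last two terms, write $2\eta f(A\nabla\Phi\cdot\nabla f)=\eta A\nabla\Phi\cdot\nabla(f^2)=\mathrm{div}(\eta f^2 A\nabla\Phi)-f^2\nabla\eta\cdot A\nabla\Phi-\eta f^2\mathrm{div}(A\nabla\Phi)$; the $\eta\,\mathrm{div}(A\nabla\Phi)f^2$ piece cancels the fourth term, and what survives is $\int_{\partial\Omega}(A\nabla\Phi\cdot\overrightarrow{n})\eta f^2\,\mathrm{d}S-\int_\Omega f^2\,\nabla\eta\cdot A\nabla\Phi\,\mathrm{d}x$. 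Finally expand $\nabla\eta=\tfrac12\nabla(\partial_t\Phi)+\tfrac14\nabla(A\nabla\Phi\cdot\nabla\Phi)$ using $\nabla(\partial_t\Phi)=\nabla\Phi/\Gamma$ and $\nabla^2\Phi=(s/\Gamma)\nabla^2\varphi$; this identifies $-\int_\Omega f^2\nabla\eta\cdot A\nabla\Phi\,\mathrm{d}x$ with $-\tfrac2\Gamma\int_\Omega\bigl(\tfrac14 A\nabla\Phi\cdot\nabla\Phi+\tfrac s8 A_{ij}\partial_{x_j}\varphi\,\partial_{x_i}A_{kl}\,\partial_{x_l}\Phi\,\partial_{x_k}\Phi+\tfrac s4 A\nabla^2\varphi\,A\nabla\Phi\cdot\nabla\Phi\bigr)f^2\,\mathrm{d}x$, which together with the $\mathcal{S}'_\varphi$ contribution is exactly the last term on the right-hand side. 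Summing all boundary and interior contributions gives $(iii)$.

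The only genuine obstacle is the bookkeeping. The variable-coefficient Rellich step must be done with the summation convention, and one has to relabel dummy indices using the symmetry of $A$ at precisely the right moments for the interior integrals to match the stated form; similarly, the identification of $-\int_\Omega f^2\nabla\eta\cdot A\nabla\Phi\,\mathrm{d}x$ with the three explicit terms in the final integral requires careful use of the factorization $\Phi=s\varphi/\Gamma$ so that all $1/\Gamma$ factors and all powers of $s$ come out correctly. No conceptual difficulty beyond this is anticipated.
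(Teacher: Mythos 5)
Your proposal is correct and follows essentially the same route as the paper: the identical expansion of $2\mathcal{S}_{\varphi}f\,\mathcal{A}_{\varphi}f$, the same Rellich-type integrations by parts with the same cancellations of the $\mathrm{div}(A\nabla\Phi)\,A\nabla f\cdot\nabla f$ and $\eta\,\mathrm{div}(A\nabla\Phi)f^{2}$ terms, and the same use of $\Phi=s\varphi/\Gamma$ to produce the final $-\tfrac{2}{\Gamma}$ term (the paper merely combines $\partial_{t}\eta+A\nabla\Phi\cdot\nabla\eta$ at the end instead of using $\partial_{t}\eta=\tfrac{2}{\Gamma}\eta$ upfront, which is the same computation).
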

\begin{proof}~By integrations by parts, we can easily check $(i)$ and $(ii)$. We next turn to the proof of $(iii)$. According to
the definitions of $\mathcal{S}_{\varphi}f$ and $\mathcal{A}_{\varphi}f,$ it is clear that

\begin{eqnarray}\label{absd1}
2\langle\mathcal{S}_{\varphi}f,  \mathcal{A}_{\varphi}f\rangle=\int_{\Omega}\left[\mathrm{div}(A\nabla f)+\eta f\right]\left[2A\nabla\Phi\cdot\nabla f+\mathrm{div}(A\nabla\Phi)f\right]\mathrm{d}x.
\end{eqnarray}

Firstly, by integrations by parts, we have that
\begin{eqnarray*}
&&2\int_{\Omega}\mathrm{div}(A\nabla f)A\nabla\Phi\cdot\nabla f\mathrm{d}x\\
&=&2\int_{\partial\Omega}(A\nabla f\cdot \overrightarrow{n}) (A\nabla\Phi\cdot\nabla f)\mathrm{d}S-2\int_{\Omega}A\nabla f\cdot \nabla(A\nabla\Phi\cdot\nabla f)\mathrm{d}x
\\
&=& 2\int_{\partial\Omega}(A\nabla f\cdot \overrightarrow{n})( A\nabla\Phi\cdot\nabla f)\mathrm{d}S-2\int_{\Omega}A_{ij}\partial_{x_{j}}f\partial_{x_{i}}A_{kl}\partial_{x_{l}}f\partial_{x_{k}}\Phi \mathrm{d}x \\
&&-2\int_{\Omega}A\nabla^2 \Phi A\nabla f\cdot \nabla f \mathrm{d}x-2\int_{\Omega}A\nabla^2 f A\nabla f\cdot \nabla \Phi \mathrm{d}x,
\end{eqnarray*}
and
\begin{eqnarray*}
&&-2\int_{\Omega}A\nabla^2 f A\nabla f\cdot \nabla \Phi \mathrm{d}x\\
&=&-\int_{\partial\Omega}(A\nabla \Phi \cdot \overrightarrow{n}) (A\nabla f\cdot\nabla f)\mathrm{d}S+\int_{\Omega}\partial_{x_{l}}A_{ij}\partial_{x_{j}}fA_{kl}\partial_{x_{i}}f\partial_{x_{k}}\Phi \mathrm{d}x \\
&&+\int_{\Omega}\mathrm{div}(A\nabla \Phi)A\nabla f\cdot \nabla f\mathrm{d}x.
\end{eqnarray*}
The above two equalities imply that
\begin{eqnarray}\label{absd2}
&&2\int_{\Omega}\mathrm{div}(A\nabla f)A\nabla\Phi\cdot\nabla f\mathrm{d}x\nonumber\\
&=& 2\int_{\partial\Omega}(A\nabla f\cdot \overrightarrow{n})( A\nabla\Phi\cdot\nabla f)\mathrm{d}S-\int_{\partial\Omega}(A\nabla \Phi \cdot \overrightarrow{n}) (A\nabla f\cdot\nabla f)\mathrm{d}S\nonumber\\
&&-2\int_{\Omega}A_{ij}\partial_{x_{j}}f\partial_{x_{i}}A_{kl}\partial_{x_{l}}f\partial_{x_{k}}\Phi \mathrm{d}x+\int_{\Omega}\partial_{x_{l}}A_{ij}\partial_{x_{j}}fA_{kl}\partial_{x_{i}}f\partial_{x_{k}}\Phi \mathrm{d}x  \nonumber\\
&&-2\int_{\Omega}A\nabla^2 \Phi A\nabla f\cdot \nabla f \mathrm{d}x+\int_{\Omega}\mathrm{div}(A\nabla \Phi)A\nabla f\cdot \nabla f\mathrm{d}x.
\end{eqnarray}

Secondly, by integrations by parts, we get that
\begin{eqnarray}\label{absd3}
&&\int_{\Omega}\mathrm{div}(A\nabla f)\mathrm{div}(A\nabla \Phi) f\mathrm{d}x\nonumber\\
&=& \int_{\partial\Omega}(A\nabla f\cdot \overrightarrow{n})\mathrm{div}(A\nabla \Phi) f\mathrm{d}S-\int_{\Omega}A\nabla f\cdot\nabla\left[\mathrm{div}(A\nabla \Phi) f\right]\mathrm{d}x\nonumber\\
&=& \int_{\partial\Omega}(A\nabla f\cdot \overrightarrow{n})\mathrm{div}(A\nabla \Phi) f\mathrm{d}S-\int_{\Omega}A\nabla f\cdot\nabla\left[\mathrm{div}(A\nabla \Phi)\right]f\mathrm{d}x\nonumber\\
&&-\int_{\Omega}\mathrm{div}(A\nabla \Phi)A\nabla f\cdot\nabla f \mathrm{d}x,
\end{eqnarray}
and
\begin{eqnarray}\label{absd4}
2\int_{\Omega}A\nabla\Phi\cdot\nabla f \eta f \mathrm{d}x
&=&\int_{\partial\Omega}(A\nabla \Phi \cdot \overrightarrow{n})\eta f^2\mathrm{d}S-\int_{\Omega} A\nabla\Phi\cdot\nabla \eta  f^2 \mathrm{d}x\nonumber\\
&&-\int_{\Omega}\eta \mathrm{div}(A\nabla \Phi)f^2\mathrm{d}x.
\end{eqnarray}
It follows from the definition of $\mathcal{S'}_{\varphi}f$ and \eqref{absd1}-\eqref{absd4} that
\begin{eqnarray}\label{absd5}
&&\int_{\Omega}\mathcal{S'}_{\varphi}f f\mathrm{d}x+2\int_{\Omega}\mathcal{S}_{\varphi}f \mathcal{A}_{\varphi}f\mathrm{d}x\nonumber\\
&=& 2\int_{\partial\Omega}(A\nabla f\cdot \overrightarrow{n})( A\nabla\Phi\cdot\nabla f)\mathrm{d}S-\int_{\partial\Omega}(A\nabla \Phi \cdot \overrightarrow{n}) (A\nabla f\cdot\nabla f)\mathrm{d}S\nonumber\\
&&+ \int_{\partial\Omega}(A\nabla f\cdot \overrightarrow{n})\mathrm{div}(A\nabla \Phi) f\mathrm{d}S+\int_{\partial\Omega}(A\nabla \Phi \cdot \overrightarrow{n})\eta f^2\mathrm{d}S\nonumber\\
&&-2\int_{\Omega}A_{ij}\partial_{x_{j}}f\partial_{x_{i}}A_{kl}\partial_{x_{l}}f\partial_{x_{k}}\Phi \mathrm{d}x+\int_{\Omega}\partial_{x_{l}}A_{ij}\partial_{x_{j}}fA_{kl}\partial_{x_{i}}f\partial_{x_{k}}\Phi \mathrm{d}x  \nonumber\\
&&-2\int_{\Omega}A\nabla^2 \Phi A\nabla f\cdot \nabla f \mathrm{d}x-\int_{\Omega}A\nabla f\cdot\nabla\left[\mathrm{div}(A\nabla \Phi)\right]f\mathrm{d}x\nonumber\\
&&-\int_{\Omega} (\partial_{t}\eta+A\nabla\Phi\cdot\nabla \eta)  f^2 \mathrm{d}x.
\end{eqnarray}

Finally, using $\partial_{t}\Phi=\frac{1}{\Gamma}\Phi$ and $\partial_{t}^2\Phi=\frac{2}{\Gamma}\partial_{t}\Phi$, we obtain that
\begin{eqnarray*}
&&-\partial_{t}\eta-A\nabla\eta\cdot\nabla\Phi\\
&=&-\frac{1}{2}\partial_{t}^2\Phi-A\nabla\Phi\cdot\nabla\Phi_{t}-\frac{1}{4}A\nabla(A\nabla\Phi\cdot\nabla\Phi)\cdot\nabla\Phi\\
&=&-\frac{1}{\Gamma}(2\eta-\frac{1}{2}A\nabla\Phi\cdot\nabla\Phi)-\frac{1}{\Gamma}A\nabla\Phi\cdot\nabla\Phi-\frac{1}{4}A_{ij}\partial_{x_{j}}\Phi\partial_{x_{i}}A_{kl}\partial_{x_{l}}\Phi\partial_{x_{k}}\Phi-\frac{1}{2}A\nabla^{2}\Phi A\nabla\Phi\cdot\nabla\Phi\\
&=&-\frac{2}{\Gamma}\eta-\frac{1}{2\Gamma}A\nabla\Phi\cdot\nabla\Phi-\frac{s}{4\Gamma}A_{ij}\partial_{x_{j}}\varphi\partial_{x_{i}}A_{kl}\partial_{x_{l}}\Phi\partial_{x_{k}}\Phi-\frac{s}{2\Gamma}A\nabla^{2}\varphi A\nabla\Phi\cdot\nabla\Phi.
\end{eqnarray*}
This, along with \eqref{absd5}, yields $(iii)$.
\end{proof}

The following three results will be useful in the proof of Theorem \ref{jiudu4}.
\begin{Proposition}\label{lemma-1234}
Let $h>0, T>0$ and $F_{1}(\cdot), F_{2}(\cdot)\in C([0,T]).$ Consider two positive functions $y(\cdot), N(\cdot)\in C^1([0,T])$ such that
\begin{equation}\label{KKKK}
\left\{
\begin{array}{l}
\left|\frac{1}{2}y'(t)+N(t)y(t)\right|\leq \left(\frac{1}{2}N(t)+\frac{S_{0}}{T-t+h}+S_{1}\right)y(t)+F_{1}(t)y(t),\\
\\
N'(t)\leq \left(\frac{1+S_{0}}{T-t+h}+S_{1}\right)N(t)+F_{2}(t),
\end{array}
\right.
\end{equation}
where $S_0, S_{1}\geq0.$ Then for any $0\leq t_{1}< t_{2}<t_{3}\leq T,$ one has
$$y(t_2)^{1+M}\leq y(t_3)y(t_1)^M e^{D} \left(\frac{T-t_{1}+h}{T-t_{3}+h}\right)^{3S_{0}(1+M)}, \;\;\ \forall \ M\geq M_{0},$$
with
\begin{equation}\label{12-26-1}
M_{0}:=3\frac{\int^{t_{3}}_{t_{2}}\frac{e^{tS_{1}}}{(T-t+h)^{1+S_{0}}}\mathrm{d}t}{\int^{t_{2}}_{t_{1}}\frac{e^{tS_{1}}}{(T-t+h)^{1+S_{0}}}
\mathrm{d}t}
\end{equation}
and
$$D:=3(M+1)\left[\left(\int_{t_{1}}^{t_{3}}|F_{2}|\mathrm{d}t+S_{1}\right)(t_{3}-t_{1})+\int_{t_{1}}^{t_{3}}|F_{1}|\mathrm{d}t\right].$$
\end{Proposition}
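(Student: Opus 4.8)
The plan is to take logarithms and prove the equivalent three-point inequality. Writing $\Gamma(t):=T-t+h$ and $a_i:=\ln y(t_i)$ for $i=1,2,3$, the asserted estimate is the same as
\[
M\,(a_2-a_1)-(a_3-a_2)\;\leq\;D+3S_0(1+M)\ln\frac{\Gamma(t_1)}{\Gamma(t_3)},
\]
so everything reduces to bounding $a_2-a_1=\int_{t_1}^{t_2}(\ln y)'\,\mathrm{d}t$ from above, $a_3-a_2=\int_{t_2}^{t_3}(\ln y)'\,\mathrm{d}t$ from below, and comparing $\int_{t_1}^{t_2}N\,\mathrm{d}t$ with $\int_{t_2}^{t_3}N\,\mathrm{d}t$.

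First I would convert the two lines of \eqref{KKKK} into pointwise statements. Since $y>0$, dividing the first line by $y$ gives
\[
-3N(t)-R(t)\;\leq\;(\ln y)'(t)\;\leq\;-N(t)+R(t),\qquad R(t):=\frac{2S_0}{\Gamma(t)}+2S_1+2|F_1(t)|.
\]
For the frequency $N$ I would use the integrating factor $w(t):=e^{tS_1}\,\Gamma(t)^{-(1+S_0)}$, designed so that $w'(t)/w(t)=\frac{1+S_0}{\Gamma(t)}+S_1$; then the second line of \eqref{KKKK} is exactly $(N/w)'\leq F_2/w$. The point to exploit is that $w$ is \emph{non-decreasing} on $[0,T]$ (because $h>0$ and $S_0,S_1\geq 0$ force $w'/w\geq 0$), so integrating $(N/w)'\leq |F_2|/w$ from $s$ to $t$ and then replacing $w(\tau)$ by the smaller $w(s)$ inside the error integral produces the clean bound
\[
N(t)\;\leq\;\frac{w(t)}{w(s)}\Big(N(s)+\int_{t_1}^{t_3}|F_2|\,\mathrm{d}t\Big),\qquad t_1\leq s\leq t\leq t_3.
\]

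The heart of the argument is the comparison
\[
\int_{t_2}^{t_3}N\,\mathrm{d}t\;\leq\;\frac{M_0}{3}\Big(\int_{t_1}^{t_2}N\,\mathrm{d}t+(t_2-t_1)\int_{t_1}^{t_3}|F_2|\,\mathrm{d}t\Big),
\]
which I would obtain by multiplying the previous display (for $t\in[t_2,t_3]$) by $w(s)$, integrating in $s$ over $[t_1,t_2]$ and dividing by $\int_{t_1}^{t_2}w$, and then integrating in $t$ over $[t_2,t_3]$; the ratio $\big(\int_{t_2}^{t_3}w\big)/\big(\int_{t_1}^{t_2}w\big)$ produced this way equals exactly $M_0/3$ by \eqref{12-26-1}. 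Feeding this into
\[
M(a_2-a_1)-(a_3-a_2)\;\leq\;-M\!\int_{t_1}^{t_2}\!N\,\mathrm{d}t+3\!\int_{t_2}^{t_3}\!N\,\mathrm{d}t+M\!\int_{t_1}^{t_2}\!R\,\mathrm{d}t+\int_{t_2}^{t_3}\!R\,\mathrm{d}t
\]
(obtained from the pointwise bounds, the upper one on $[t_1,t_2]$ and the lower one on $[t_2,t_3]$), the $N$-terms collapse to $(M_0-M)\int_{t_1}^{t_2}N\,\mathrm{d}t\leq 0$ as soon as $M\geq M_0$, while the same inequality $M_0\leq M$ bounds the remaining $F_2$-error $M_0(t_2-t_1)\int_{t_1}^{t_3}|F_2|\,\mathrm{d}t$ by $(M+1)(t_3-t_1)\int_{t_1}^{t_3}|F_2|\,\mathrm{d}t$.

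It then remains to estimate $M\int_{t_1}^{t_2}R\,\mathrm{d}t+\int_{t_2}^{t_3}R\,\mathrm{d}t$: each of the two integrals of $R$ is at most $2S_0\ln\frac{\Gamma(t_1)}{\Gamma(t_3)}+2S_1(t_3-t_1)+2\int_{t_1}^{t_3}|F_1|\,\mathrm{d}t$, since $\int\frac{2S_0}{\Gamma}$ telescopes to a logarithm and $\Gamma$ is decreasing. Collecting the three error contributions, the right-hand side is $\leq D+3S_0(1+M)\ln\frac{\Gamma(t_1)}{\Gamma(t_3)}$ — the constants $2$ fit comfortably inside the constants $3$ that are built into $D$ — and exponentiating gives the statement. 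I expect the only genuinely delicate point to be this final matching, i.e.\ checking that each $F_1$-, $F_2$-, $S_0$- and $S_1$-term really lands inside $D$ and the prescribed power of $\Gamma(t_1)/\Gamma(t_3)$; it is the monotonicity of $w$ that makes it work, since that is what turns the a priori $w$-weighted $F_2$-error into the clean quantity $(t_2-t_1)\int_{t_1}^{t_3}|F_2|\,\mathrm{d}t$, and it is the step I would handle most carefully.
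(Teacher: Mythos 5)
Your proof is correct: the two-sided bound $-3N-R\leq(\ln y)'\leq -N+R$ with $R=\tfrac{2S_0}{\Gamma}+2S_1+2|F_1|$, the monotonicity of $w(t)=e^{tS_1}\Gamma(t)^{-(1+S_0)}$, the averaged comparison $\int_{t_2}^{t_3}N\,\mathrm{d}t\leq\tfrac{M_0}{3}\bigl(\int_{t_1}^{t_2}N\,\mathrm{d}t+(t_2-t_1)\int_{t_1}^{t_3}|F_2|\,\mathrm{d}t\bigr)$, and the final constant matching all check out, with the positivity of $N$ and $M\geq M_0$ used exactly where you invoke them, and the factors $2$ indeed fit inside the $3$'s of $D$ and of the exponent $3S_0(1+M)$. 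The skeleton coincides with the paper's: the paper uses the equivalent (decreasing) weight $(T-t+h)^{1+S_0}e^{-tS_1}$, the same pointwise consequences of the first line of \eqref{KKKK}, and the same $M_0$ arising as a ratio of weighted integrals. The one genuine difference is how the two subintervals are coupled. The paper propagates the pointwise value $N(t_2)$ backward and forward by Gronwall for $N$, obtains the two exponentiated inequalities (\ref{KKKK2}) on $[t_1,t_2]$ and (\ref{KKKK3}) on $[t_2,t_3]$, and glues them by raising (\ref{KKKK2}) to the power $M_0$, which requires the extra observation that the bracketed quantity there is at least $1$ in order to pass from $M_0$ to an arbitrary $M\geq M_0$. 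You instead work with $\ln y$ and compare $\int_{t_1}^{t_2}N$ with $\int_{t_2}^{t_3}N$ directly by averaging in $s$ and $t$ against $w$, after which the passage to general $M$ is just discarding the nonpositive term $(M_0-M)\int_{t_1}^{t_2}N$ and enlarging the $F_2$-error; this makes the bookkeeping slightly cleaner, at the cost of losing the pointwise control of $N(t_2)$ that the paper's (more standard frequency-function) formulation keeps. Both routes deliver the stated constants with room to spare.
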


The proof of Proposition~\ref{lemma-1234} is a slight modification for that of Lemma 4.3 in \cite{Claude}.
For the sake of completeness, we give its detailed proof below.

\begin{proof}~By the second inequality of (\ref{KKKK}), we have that
\begin{equation}\label{KKKK1}
\left[(T-t+h)^{1+S_0}e^{-tS_{1}}N(t)\right]'\leq (T-t+h)^{1+S_0}e^{-tS_{1}}F_{2}(t).
\end{equation}
On one hand, integrating (\ref{KKKK1}) over $(t, t_2)$ with $t\in(t_{1}, t_{2})$, we obtain that
\begin{equation}\label{KKKKh1}
\left(\frac{T-t_{2}+h}{T-t+h}\right)^{1+S_{0}}e^{-S_{1}(t_2-t)}N(t_2)-\int_{t_{1}}^{t_{2}}|F_{2}(s)|\mathrm{d}s\leq N(t).
\end{equation}
It follows from the first inequality of (\ref{KKKK}) and (\ref{KKKKh1}) that
$$y'(t)+\left[\left(\frac{T-t_{2}+h}{T-t+h}\right)^{1+S_{0}}e^{-S_{1}(t_2-t)}N(t_2)-\frac{2S_{0}}{T-t+h}
-2S_{1}-2\int_{t_{1}}^{t_{2}}|F_{2}(s)|\mathrm{d}s-2|F_{1}(t)|\right]y(t)\leq0,$$
where $t\in [t_1,t_2]$. This implies that
\begin{equation}\label{KKKK2}
\begin{array}{lll}
&&e^{N(t_2)\displaystyle{}\int_{t_{1}}^{t_{2}}\left(\frac{T-t_{2}+h}{T-t+h}\right)^{1+S_{0}}e^{-S_{1}(t_2-t)}\mathrm{d}t}\\
&\leq& \displaystyle{\frac{y(t_{1})}{y(t_{2})}}\left(\displaystyle{\frac{T-t_{1}+h}{T-t_{2}+h}}\right)^{2S_{0}}e^{2(t_{2}-t_{1})
\left(S_{1}+\displaystyle{}\int_{t_{1}}^{t_{2}}|F_{2}(s)|\mathrm{d}s\right)+\displaystyle{}2\int_{t_{1}}^{t_{2}}|F_{1}(s)|\mathrm{d}s}.
\end{array}
\end{equation}
On the other hand, integrating (\ref{KKKK1}) over $(t_{2}, t)$ with $t\in(t_{2}, t_{3})$, we get that
\begin{eqnarray}\label{KKKKh2}
N(t)\leq e^{S_1(t-t_2)}\left(\frac{T-t_{2}+h}{T-t+h}\right)^{1+S_{0}}\left(N(t_{2})+\displaystyle{}\int_{t_{2}}^{t_{3}}|F_{2}(s)|\mathrm{d}s\right).
\end{eqnarray}
It follows from the first inequality of (\ref{KKKK}) and (\ref{KKKKh2}) that
$$y'(t)+3\left[\left(\frac{T-t_{2}+h}{T-t+h}\right)^{1+S_{0}}e^{S_{1}(t-t_2)}\left(N(t_2)
+\int_{t_{2}}^{t_{3}}|F_{2}(s)|\mathrm{d}s\right)+\frac{S_{0}}{T-t+h}+S_{1}+|F_{1}(t)|\right]y(t)\geq0,$$
where $t\in [t_2,t_3]$. This yields that
\begin{equation}\label{KKKK3}
\begin{array}{lll}
y(t_{2})&\leq& e^{3\left(N(t_2)+\displaystyle{}\int_{t_{2}}^{t_{3}}|F_{2}(s)|\mathrm{d}s\right)\displaystyle{}
\int_{t_{2}}^{t_{3}}\left(\frac{T-t_{2}+h}{T-t+h}\right)^{1+S_{0}}e^{S_{1}(t-t_2)}\mathrm{d}t}\\
&&
\times y(t_{3})\left(\displaystyle{\frac{T-t_{2}+h}{T-t_{3}+h}}\right)^{3S_{0}}e^{3S_{1}(t_{3}-t_{2})
+\displaystyle{}3\int_{t_{2}}^{t_{3}}|F_{1}(s)|\mathrm{d}s}.
\end{array}
\end{equation}

According to (\ref{KKKK3}) and (\ref{KKKK2}), it is clear that
\begin{eqnarray*}
y(t_{2})&\leq& y(t_{3})\left(\frac{y(t_{1})}{y(t_{2})}\left(\frac{T-t_{1}+h}{T-t_{2}+h}\right)^{2S_{0}} e^{2(t_{2}-t_{1})\left(S_{1}+\displaystyle{}\int_{t_{1}}^{t_{2}}|F_{2}(s)|\mathrm{d}s\right)
+\displaystyle{}2\int_{t_{1}}^{t_{2}}|F_{1}(s)|\mathrm{d}s}\right)^{M_{0}}\\
&&\times e^{\displaystyle{}M_{0}(t_{2}-t_{1})\int_{t_{2}}^{t_{3}}|F_{2}(s)|\mathrm{d}s} \left(\frac{T-t_{2}+h}{T-t_{3}+h}\right)^{3S_{0}}e^{3S_{1}(t_{3}-t_{2})+\displaystyle{}3\int_{t_{2}}^{t_{3}}|F_{1}(s)|\mathrm{d}s}
\end{eqnarray*}
and
\begin{equation*}
\frac{y(t_{1})}{y(t_{2})}\left(\displaystyle{\frac{T-t_{1}+h}{T-t_{2}+h}}\right)^{2S_{0}}e^{2(t_{2}-t_{1})
\left(S_{1}+\displaystyle{}\int_{t_{1}}^{t_{2}}|F_{2}(s)|\mathrm{d}s\right)+\displaystyle{}2\int_{t_{1}}^{t_{2}}|F_{1}(s)|\mathrm{d}s}\geq 1,
\end{equation*}
where $M_0$ is given by (\ref{12-26-1}). The above two inequalities imply that for any $M\geq M_{0}$,
\begin{equation}\label{12-26-2}
\begin{array}{lll}
y(t_{2})&\leq& y(t_{3})\left(\displaystyle{\frac{y(t_{1})}{y(t_{2})}}\displaystyle{\left(\frac{T-t_{1}+h}{T-t_{2}+h}\right)}^{2S_{0}} e^{2(t_{2}-t_{1})\left(S_{1}+\displaystyle{}\int_{t_{1}}^{t_{2}}|F_{2}(s)|\mathrm{d}s\right)
+\displaystyle{}2\int_{t_{1}}^{t_{2}}|F_{1}(s)|\mathrm{d}s}\right)^{M}\\
&&\times e^{\displaystyle{}M(t_{2}-t_{1})\int_{t_{2}}^{t_{3}}|F_{2}(s)|\mathrm{d}s} \displaystyle{\left(\frac{T-t_{2}+h}{T-t_{3}+h}\right)^{3S_{0}}}e^{3S_{1}(t_{3}-t_{2})+\displaystyle{}3\int_{t_{2}}^{t_{3}}|F_{1}(s)|\mathrm{d}s}.
\end{array}
\end{equation}
The result follows from (\ref{12-26-2}) immediately.\\

In summary, we finish the proof of Proposition~\ref{lemma-1234}.
\end{proof}

\begin{Proposition}[\cite{Buffe}]\label{lemma-absd1}
Let $\Omega\subset \mathbb{R}^N (N\geq1)$ be a bounded connected open set with boundary $\partial\Omega$ of class $C^{\infty}$, and let $\widetilde{\omega}$ be a nonempty open subset of $\Omega$. Then there exists a positive integer $d,$ some points $ p_1, p_2,\cdot\cdot\cdot,p_d \in \widetilde{\omega},$ and $(\psi_1, \psi_2,\cdot\cdot\cdot,\psi_d)\in (C^{\infty}(\overline{\Omega}))^d$ so that for all $1\leq i\leq d$,
\begin{description}
\item[$(i)$] $\psi_{i}>0$ in $\Omega$, $\psi_{i}=0$ on $\partial\Omega;$

\item[$(ii)$] $\{x\in \Omega: |\nabla\psi_{i}(x)|=0\}=\{p_{j}: 1\leq j\leq d\};$

\item[$(iii)$] $p_i$ is the unique global maximum of $\psi_{i}$;

\item[$(iv)$] the critical points of $\psi_{i}$ are nondegenerate;

\item[$(v)$] for any $1\leq j\leq d$, $\max_{\overline{\Omega}}\psi_{j}=\max_{\overline{\Omega}}\psi_{i}$.
\end{description}
\end{Proposition}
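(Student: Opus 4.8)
### Proof Proposal for Proposition~\ref{lemma-absd1}

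\textbf{Overall strategy.} The plan is to construct the functions $\psi_i$ one point $p_i$ at a time, building each $\psi_i$ as a carefully chosen eigenfunction-type or Morse-theoretic modification of a fixed reference function, and then to rescale all of them so that condition~$(v)$ (equal global maxima) holds. The natural starting point is to take a smooth function $\phi$ vanishing on $\partial\Omega$ and positive inside — for instance a defining function for $\Omega$, or the first Dirichlet eigenfunction, which is already positive in $\Omega$ and vanishes on $\partial\Omega$. The issue is that such a $\phi$ has no reason to be Morse, nor to have its critical points located inside $\widetilde\omega$. So the real content is a perturbation argument: given any prescribed point $p\in\widetilde\omega$, one wants to perturb $\phi$ to a new smooth function that (a) still vanishes exactly on $\partial\Omega$ and is positive inside, (b) is Morse, (c) has $p$ as its unique global maximum, and (d) has \emph{all} of its interior critical points equal to a single point — wait, that last requirement, namely $(ii)$ together with $(v)$, says the zero set of $|\nabla\psi_i|$ is the \emph{same} finite set $\{p_1,\dots,p_d\}$ for every $i$, so in fact each $\psi_i$ must have exactly the $d$ critical points $p_1,\dots,p_d$, all nondegenerate, with $p_i$ being the global max.

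\textbf{Key steps.} First, I would invoke a standard genericity/transversality fact: in the space of smooth functions on $\overline\Omega$ vanishing on $\partial\Omega$ with nonvanishing normal derivative there, Morse functions form a dense (residual) set, and moreover one can prescribe finitely many critical points and their indices by a local modification. Concretely, fix finitely many distinct points $p_1,\dots,p_d\in\widetilde\omega$ (the number $d$ and the positions will be chosen at the end). Start from the Dirichlet ground state $\phi_1>0$. Near each $p_j$, add a small compactly supported bump $\chi_j(x)\,Q_j(x-p_j)$ where $Q_j$ is a nondegenerate quadratic form and $\chi_j$ is a cutoff supported in a tiny ball inside $\widetilde\omega$; choosing the amplitude small enough preserves positivity and the boundary behavior, and choosing $Q_j$ appropriately forces $p_j$ to be a nondegenerate critical point. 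A further generic small perturbation supported away from the $p_j$'s removes all other critical points and makes everything Morse while keeping $p_1,\dots,p_d$ as the only critical points. This produces, for a \emph{single} choice of reference data, one function; to get $\psi_i$ with $p_i$ as global maximum, I adjust the sign/size of the quadratic piece $Q_i$ (make $p_i$ a strict local max with a large value) relative to the others, and then do a global normalization. Second, having built $\widetilde\psi_1,\dots,\widetilde\psi_d$, each Morse with critical set exactly $\{p_1,\dots,p_d\}$ and each $\widetilde\psi_i$ attaining its max at $p_i$, I rescale: replace $\widetilde\psi_i$ by $\widetilde\psi_i / (\max_{\overline\Omega}\widetilde\psi_i)$ times a common constant, so that $(v)$ holds. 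Rescaling by a positive constant preserves $(i)$–$(iv)$ since it does not move critical points or change their nondegeneracy.

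\textbf{Main obstacle.} The delicate point is \emph{simultaneously} enforcing $(ii)$ — that the critical set is \emph{exactly} the finite prescribed set and contains no spurious points near $\partial\Omega$ — together with the boundary conditions $\psi_i=0$ on $\partial\Omega$ and $\psi_i>0$ inside. Near $\partial\Omega$, $\psi_i$ looks like (distance to $\partial\Omega$) times a positive function, so $\nabla\psi_i$ is essentially the inward normal there and does not vanish in a collar neighborhood; this has to be checked and preserved under all perturbations, which forces the perturbations to be supported in a fixed compact subset of $\Omega$ (indeed inside $\widetilde\omega$). Away from that collar, killing extra critical points is a codimension-counting argument (the bad set of functions with a degenerate or extra critical point is meager), but one must verify it can be done with perturbations small enough to keep positivity — this is where I expect to spend the most care. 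Since this proposition is quoted from \cite{Buffe}, I would in practice cite that reference for the full construction and only sketch the above; but the honest self-contained route is the transversality-plus-local-surgery argument just outlined.
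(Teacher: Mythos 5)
The paper does not prove this proposition at all: it is quoted from \cite{Buffe}, so your fallback of citing that reference is exactly what the authors do. Judged as a self-contained argument, however, your sketch has two genuine gaps, both in the load-bearing steps. First, you cannot fix the integer $d$ and the points $p_1,\dots,p_d\in\widetilde{\omega}$ in advance and then ``remove all other critical points'' by a generic small perturbation: nondegenerate critical points are stable under $C^2$-small perturbations, so they cannot be eliminated that way, and the Morse inequalities relative to the boundary force any function that is positive in $\Omega$ and vanishes on $\partial\Omega$ to possess critical points whose number and indices reflect the topology of $\Omega$ (for an annulus, say, a saddle is unavoidable). In the statement, $d$ is an output of the construction, not a free parameter. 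The standard device --- the one behind \cite{FI} and behind the refined statement in \cite{Buffe} --- is to take a generic Morse function vanishing on $\partial\Omega$ with nonvanishing normal derivative (hence finitely many nondegenerate interior critical points) and then to compose with a diffeomorphism of $\overline{\Omega}$, fixing the boundary, which transports those finitely many critical points into $\widetilde{\omega}$; this relocation is emphatically not a small perturbation, and it is the idea missing from your plan. Second, the local surgery is inconsistent as written: adding $\chi_j(x)\,Q_j(x-p_j)$ with $Q_j$ quadratic does not change the gradient at $p_j$, so $p_j$ becomes critical only if it already was; cancelling $\nabla\phi_1(p_j)$ requires a linear term of fixed, non-small size, after which positivity and the absence of new zeros of the gradient in the cutoff annulus are no longer automatic.

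The same difficulty affects the step you treat most casually, namely $(iii)$ and $(v)$: to make $p_i$ the unique global maximum of $\psi_i$ while keeping the critical set exactly $\{p_1,\dots,p_d\}$ and all Hessians nondegenerate, it is not enough to ``adjust the size of $Q_i$''; one must raise the critical value at $p_i$ relative to the others by adding a correction that is locally constant near every $p_j$ and whose gradient is dominated by $|\nabla\psi|$ away from the critical points, i.e.\ a genuine Morse-theoretic manipulation of critical values at fixed critical positions --- otherwise the interpolation region, where $|\nabla\psi|$ may be small, can acquire new critical points. (Your final rescaling to equalize the maxima is fine, since multiplication by a positive constant preserves $(i)$--$(iv)$, and your observation that a boundary collar with $|\nabla\psi_i|$ bounded below confines all modifications to a fixed compact subset of $\Omega$ is also correct.) So the ingredients you list are the right ones, but the two key mechanisms --- moving the critical set into $\widetilde{\omega}$ by a boundary-fixing diffeomorphism and reordering critical values without creating new critical points --- need the machinery of \cite{Buffe}, not genericity plus small bumps.
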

Let
\begin{equation}\label{12-22-add}
\left\{
\begin{array}{l}
\varphi_{i,1}:=\psi_{i}-\max_{\overline{\Omega}}\psi_{i}, 1\leq i\leq d,\\
\\
\varphi_{i,2}:=-\psi_{i}-\max_{\overline{\Omega}}\psi_{i}, 1\leq i\leq d.
\end{array}
\right.
\end{equation}
For all $1\leq i\leq d$, it is clear that
\begin{eqnarray}\label{absd6}
\varphi_{i,1}=\varphi_{i,2}\ \mathrm{on}\ \partial\Omega, \ \mathrm{and} \ \nabla\varphi_{i,1}+\nabla\varphi_{i,2}=0 \ \mathrm{on}\ \partial\Omega.
\end{eqnarray}
\begin{Proposition}[\cite{Buffe}]\label{lemma-1} Under the assumptions of Proposition~\ref{lemma-absd1}, there exist positive constants  $c_1 :=c_1(\Omega,\widetilde{\omega}),\cdots,c_{6} :=c_6(\Omega,\widetilde{\omega})$ so that for each $1\leq i\leq d$, there are
subsets
$\mathcal{B}_{i}, \mathcal{C}_{i}, \vartheta_{i}\subset \Omega$ with $\mathcal{B}_{i}\cap \mathcal{C}_{i}=\varnothing (1\leq i\leq d)$
and $\vartheta_{i}$ being a neighbourhood of $\partial\Omega$ satisfying
\begin{description}
\item[$(i)$]
 In $\mathcal{D}_{i}$, $$c_{1}|\nabla \varphi_{i,1}|^{2}\leq|\varphi_{i,1}|\leq c_{2}|\nabla \varphi_{i,1}|^{2},$$
where $\mathcal{D}_{i}:=\Omega\backslash(\mathcal{B}_{i}\cup \mathcal{C}_{i});$
\item[$(ii)$]  In $\mathcal{B}_{i}$, $$c_{1}|\nabla \varphi_{i,1}|^{2}\leq|\varphi_{i,1}|\leq c_{2}|\nabla \varphi_{i,1}|^{2};$$
\item[$(iii)$] In $\mathcal{C}_{i}$, $$c_{1}|\nabla\varphi_{i,1}|^{2}\leq|\varphi_{i,1}|;$$
\item[$(iv)$] There is $1\leq j\leq d$ with $j\neq i$ so that
$$\varphi_{i,1}-\varphi_{j,1}\leq-c_{3}\ in \ \mathcal{C}_{i};$$
\item[$(v)$]  $$c_{4}|\nabla\varphi_{i,2}|^{2}\leq|\varphi_{i,2}| \ in \ \Omega\ and \ |\varphi_{i,2}|\leq c_{5}|\nabla \varphi_{i,2}|^{2}\ in \  \vartheta_i;$$
\item[$(vi)$] $$\varphi_{i,2}-\varphi_{i,1}\leq -c_{6}\ in\ \Omega\setminus \vartheta_i.$$
\end{description}
\end{Proposition}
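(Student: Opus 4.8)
The plan is to follow the geometric construction used in \cite{Buffe}, but to carry it out carefully so that all constants depend only on $\Omega$ and $\widetilde{\omega}$. Starting from the functions $\psi_i$ and the points $p_j$ furnished by Proposition~\ref{lemma-absd1}, and the translated weights $\varphi_{i,1}=\psi_i-\max_{\overline{\Omega}}\psi_i$ and $\varphi_{i,2}=-\psi_i-\max_{\overline{\Omega}}\psi_i$ from \eqref{12-22-add}, the first step is to record the local behaviour of each $\psi_i$ near its (finitely many, by $(ii)$ and $(iv)$) critical points. Since every critical point is nondegenerate, a Morse-lemma/Taylor argument gives, in a small ball $B(p_j,r_j)$, two-sided bounds of the form $c|x-p_j|^2\le|\nabla\psi_i(x)|^2\le C|x-p_j|^2$ and $c|x-p_j|^2\le|\psi_i(x)-\psi_i(p_j)|\le C|x-p_j|^2$; away from all critical points $|\nabla\psi_i|$ is bounded below by compactness, and $|\varphi_{i,1}|$ is likewise bounded below and above. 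Combining these gives, on $\Omega$ minus small balls around the $p_j$, the comparison $c_1|\nabla\varphi_{i,1}|^2\le|\varphi_{i,1}|\le c_2|\nabla\varphi_{i,1}|^2$; this is essentially $(i)$ once we decide how the balls are split among $\mathcal{B}_i$ and $\mathcal{C}_i$.

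Next I would make that split. The point $p_i$ is the \emph{global} maximum of $\psi_i$ by $(iii)$, so $\varphi_{i,1}=0$ exactly at $p_i$ and $\varphi_{i,1}<0$ elsewhere; near $p_i$ the upper Morse bound $|\varphi_{i,1}|\le c_2|\nabla\varphi_{i,1}|^2$ still holds because $\varphi_{i,1}(p_i)=0$, so the ball around $p_i$ can be absorbed into $\mathcal{B}_i$ and the two-sided estimate $(ii)$ holds there. For the remaining critical points $p_j$ with $j\ne i$ one only has $\varphi_{i,1}(p_j)=\psi_i(p_j)-\max\psi_i<0$, so the upper bound fails (the numerator does not vanish while the gradient does); these balls go into $\mathcal{C}_i$, where one keeps only the lower bound $(iii)$ $c_1|\nabla\varphi_{i,1}|^2\le|\varphi_{i,1}|$, valid because $|\varphi_{i,1}|$ is bounded below there and $|\nabla\varphi_{i,1}|$ is bounded. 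Shrinking the $\mathcal{C}_i$-balls if necessary, on each of them $\varphi_{i,1}\le-c$ for a uniform $c>0$, while at that same point $p_j$ there is an index $k$ (take $k$ so that $p_j$ is the global max of $\psi_k$, which exists by construction) with $\varphi_{k,1}(p_j)=0$; by continuity $\varphi_{k,1}\ge-c/2$ on the ball, giving $(iv)$ with $j$ renamed appropriately and $c_3$ uniform. Property $(i)$ is then just $(ii)$ restricted to $\mathcal{D}_i=\Omega\setminus(\mathcal{B}_i\cup\mathcal{C}_i)$.

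The function $\varphi_{i,2}=-\psi_i-\max\psi_i$ is strictly negative on $\overline{\Omega}$ and, by $(i)$ of Proposition~\ref{lemma-absd1}, satisfies $\varphi_{i,2}=-\max\psi_i<0$ on $\partial\Omega$ with $\nabla\varphi_{i,2}=-\nabla\psi_i\ne0$ there (since $\partial\Omega$ contains no critical point, by $(ii)$). Hence $|\nabla\varphi_{i,2}|$ is bounded below on all of $\Omega$ except near the $p_j$, and near each $p_j$ the lower Morse bound $|\varphi_{i,2}|\ge|\varphi_{i,2}(p_j)|-o(1)\ge c$ handles the comparison since $|\nabla\varphi_{i,2}|$ is small there; in all cases $c_4|\nabla\varphi_{i,2}|^2\le|\varphi_{i,2}|$ on $\Omega$. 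Taking $\vartheta_i$ to be a thin tubular neighbourhood of $\partial\Omega$ that contains no critical point, compactness and the nonvanishing of $\nabla\psi_i$ there give the reverse bound $|\varphi_{i,2}|\le c_5|\nabla\varphi_{i,2}|^2$ on $\vartheta_i$, which is $(v)$. Finally, $\varphi_{i,2}-\varphi_{i,1}=-2\psi_i$, which is $\le0$ everywhere and equals $0$ only on $\partial\Omega$; choosing $\vartheta_i$ so that $\psi_i\ge c_6/2$ on $\Omega\setminus\vartheta_i$ (possible since $\psi_i>0$ on the compact set $\Omega\setminus\vartheta_i$) yields $(vi)$. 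The main obstacle is bookkeeping rather than any single hard estimate: one must choose the radii of the balls around the $p_j$, and the thickness of $\vartheta_i$, in a single coherent way so that the six properties hold simultaneously with constants that are uniform in $i$ and depend only on $(\Omega,\widetilde{\omega})$ — in particular one must verify that $\mathcal{B}_i\cap\mathcal{C}_i=\varnothing$, which is automatic once the balls are disjoint and assigned by whether they sit at $p_i$ or at another $p_j$, and that the Morse normal form can be applied uniformly, which follows from $C^\infty$ smoothness and finiteness of the critical set.
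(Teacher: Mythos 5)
A preliminary remark: the paper does not prove Proposition~\ref{lemma-1} at all — it is quoted, together with Proposition~\ref{lemma-absd1}, from \cite{Buffe} — so your sketch has to be judged on its own terms rather than against an in-paper argument. Most of it is sound: the two-sided comparison near the nondegenerate global maximum $p_i$, the compactness bounds on $\overline{\Omega}$ minus small balls, the assignment of the ball around $p_i$ to $\mathcal{B}_i$ and of the balls around the other $p_k$ to $\mathcal{C}_i$, the (essentially trivial) global lower bound in $(v)$ since $|\varphi_{i,2}|\geq \max_{\overline{\Omega}}\psi_i>0$, and item $(vi)$. One cosmetic slip: the asserted lower Morse bound $c|x-p_j|^2\leq|\psi_i(x)-\psi_i(p_j)|$ is false at saddle points (the difference vanishes on a cone through $p_j$); it is only true, and only needed, at $p_i$, where the Hessian is definite.

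There are, however, two genuine gaps. First, item $(iv)$ demands a \emph{single} index $j\neq i$ valid on all of $\mathcal{C}_i$. In your construction $\mathcal{C}_i$ must contain neighbourhoods of every $p_k$ with $k\neq i$ (near such $p_k$ the upper bound in $(i)$--$(ii)$ fails, so these points cannot lie in $\mathcal{B}_i\cup\mathcal{D}_i$), and your argument produces a different comparison index for each ball; ``renaming $j$ appropriately'' does not repair this. Indeed the single-$j$ form does not follow from the listed properties of Proposition~\ref{lemma-absd1}: for $d=3$ one can have families satisfying $(i)$--$(v)$ there with $\psi_2(p_3)<\psi_1(p_3)$ and $\psi_3(p_2)<\psi_1(p_2)$, and then no single $j$ works for $i=1$. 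So you must either prove only the per-component version (which, with finitely many pieces, is all that is actually used in Step 3 of Stage 3 of the paper), or invoke the additional ordering structure of the explicit construction in \cite{Buffe}. Second, near $\partial\Omega$ you claim $\nabla\psi_i\neq0$ ``by $(ii)$'', but $(ii)$ only describes the critical set inside the open set $\Omega$; the upper bound of $(i)$ near the boundary and the bound $|\varphi_{i,2}|\leq c_5|\nabla\varphi_{i,2}|^2$ on $\vartheta_i$ genuinely require $A\nabla\psi_i\cdot\overrightarrow{n}\neq0$ (equivalently $\nabla\psi_i\neq 0$) on $\partial\Omega$. For instance $\psi(x)=(1-|x|^2)^2$ on the unit disc satisfies every listed item of Proposition~\ref{lemma-absd1} with $d=1$ yet violates $(v)$ of Proposition~\ref{lemma-1}. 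This boundary nondegeneracy is part of the construction in \cite{Buffe} and must be stated and used explicitly; it cannot be deduced from the quoted hypotheses.
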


\section{Proof of Theorem~\ref{jiudu4}}\label{kaodu3}

We start this section by introducing some notations. For each $1\leq i\leq d$, we set
\begin{equation*}
\left\{
\begin{array}{l}
\varphi_{i}(x):=\varphi_{i,1}(x),\;x\in \Omega,\\
\\
\varphi_{d+i}(x):=\varphi_{i,2}(x),\;x\in \Omega
\end{array}
\right.
\end{equation*}
and
\begin{equation}\label{12-22-add-2}
\left\{
\begin{array}{l}
\Phi_{i}(x,t):=\displaystyle{\frac{s}{\Gamma(t)}}\varphi_{i,1}(x),\;(x,t)\in \Omega\times [0,T],\\
\\
\Phi_{d+i}(x,t):=\displaystyle{\frac{s}{\Gamma(t)}}\varphi_{i,2}(x),\;(x,t)\in \Omega\times [0,T],
\end{array}
\right.
\end{equation}
with $s\in (0,1]$, $\Gamma(t)=T-t+h$ and $h\in(0,1].$  For each $1\leq i\leq 2d$, we define
$$
f_{i}:=ue^{\Phi_{i}/2}\;\;\mbox{and}\;\;\eta_i:=\frac{1}{2}\partial_t \Phi_i+\frac{1}{4}A\nabla \Phi_i\cdot\nabla\Phi_i,\;\;(x,t)\in \Omega\times (0,T),
$$
where $u$ is the solution to \eqref{1.1}.  By \eqref{1.1}, we find that
\begin{eqnarray}\label{absd7}
&&\partial_{t}f_{i}-\mathrm{div}(A\nabla f_{i})\nonumber\\
&=& -af_{i}+\displaystyle{\frac{1}{2}}f_{i}B\cdot\nabla\Phi_{i}+\eta_{i}f_{i}-
\displaystyle{\frac{1}{2}}\mathrm{div}(A\nabla\Phi_{i})f_{i}-A\nabla\Phi_{i}\cdot\nabla f_{i}-B\cdot\nabla f_{i}\\
&=& -af_{i}+\displaystyle{\frac{1}{2}}f_{i}B\cdot\nabla\Phi_{i}+\eta_{i}f_{i}+\mathcal{A}_{\varphi_{i}}f_{i}-B\cdot\nabla f_{i}\;\;\;
 \mathrm{in}\;\; \Omega\times (0,T),\nonumber
\end{eqnarray}
and
\begin{eqnarray}\label{p20}
A\nabla f_{i}\cdot \overrightarrow{n}-\frac{1}{2}f_{i}A\nabla\Phi_{i}\cdot \overrightarrow{n}=0\;\;\; \mathrm{on} \;\; \partial\Omega\times(0,T).
\end{eqnarray}

Let $\mathbf{f}:=(f_i)_{1\leq i\leq 2d}$, $\mathcal{S}\mathbf{f}:=(\mathcal{S}_{\varphi_{i}}f_{i})_{1\leq i\leq2d}$, $\mathcal{A} \mathbf{f}:=(\mathcal{A}_{\varphi_{i}}f_{i})_{1\leq i\leq2d}$, $\mathbf{F}:=(-af_{i}+\frac{1}{2}f_{i}B\cdot\nabla\Phi_{i}-B\cdot\nabla f_{i})_{1\leq i\leq2d}$
and $\mathcal{S}^\prime \mathbf{f}:=(\mathcal{S}^\prime_{\varphi_{i}}f_{i})_{1\leq i\leq2d}$.
Then by (\ref{absd7})and (\ref{absd8}), we have that
\begin{eqnarray}\label{p19}
&&\partial_{t}\mathbf{f}+\mathcal{S}\mathbf{f}=\mathcal{A} \mathbf{f}+\mathbf{F}\;\;\;\mbox{in}\;\;\Omega\times (0,T).
\end{eqnarray}
The proof of Theorem~\ref{jiudu4} will be carried out by the following five stages.\\

\textbf{Stage 1.} We claim that

\begin{eqnarray}\label{p0008}
\left\{
\begin{array}{l}
\displaystyle\langle\mathcal{A} \mathbf{f}, \mathbf{f}\rangle=0,\\
\\
\displaystyle\langle\mathcal{S} \mathbf{f}, \mathbf{f}\rangle=\sum\limits_{i=1}^{2d}\int_{\Omega}A\nabla f_{i}\cdot\nabla f_{i}\mathrm{d}x-\sum\limits_{i=1}^{2d}\int_{\Omega}\eta_{i}f_{i}^2\mathrm{d}x,\\
\\
\displaystyle\frac{\mathrm{d}}{\mathrm{d}t}\langle\mathcal{S} \mathbf{f}, \mathbf{f}\rangle=2\langle\mathcal{S} \mathbf{f}, \mathbf{f}_{t}\rangle+\langle\mathcal{S'} \mathbf{f}, \mathbf{f}\rangle.
\end{array}
\right.
\end{eqnarray}

For this purpose, firstly,  we observe that
\begin{eqnarray*}
\langle\mathcal{A} \mathbf{f}, \mathbf{f}\rangle
&=&-\sum_{i=1}^{2d}\int_{\Omega}A\nabla\Phi_{i}\cdot\nabla f_{i}f_{i}\mathrm{d}x-\frac{1}{2}\sum_{i=1}^{2d}\int_{\Omega}\mathrm{div}(A\nabla\Phi_{i})f_{i}^2\mathrm{d}x\\
&=&-\sum_{i=1}^{2d}\int_{\Omega}A\nabla\Phi_{i}\cdot\nabla f_{i}f_{i}\mathrm{d}x+\sum_{i=1}^{2d}\int_{\Omega}A\nabla\Phi_{i}\cdot\nabla f_{i}f_{i}\mathrm{d}x-\frac{1}{2}\sum_{i=1}^{2d}\int_{\partial\Omega}A\nabla\Phi_{i}\cdot \overrightarrow{n}f_{i}^{2}\mathrm{d}S\\
&=&-\frac{1}{2}\sum_{i=1}^{2d}\int_{\partial\Omega}A\nabla\Phi_{i}\cdot \overrightarrow{n}f_{i}^{2}\mathrm{d}S,
\end{eqnarray*}
which, combined with (\ref{12-22-add-2}) and (\ref{absd6}), indicates that
\begin{eqnarray}
\langle\mathcal{A} \mathbf{f}, \mathbf{f}\rangle
&=&-\frac{1}{2}\left(\sum_{i=1}^{d}\int_{\partial\Omega}A\nabla\Phi_{i}\cdot \overrightarrow{n}f_{i}^{2}\mathrm{d}S+\sum_{i=1}^{d}\int_{\partial\Omega}A\nabla\Phi_{d+i}\cdot \overrightarrow{n}f_{d+i}^{2}\mathrm{d}S\right)\nonumber\\
&=&-\frac{s}{2\Gamma}\left(\sum_{i=1}^{d}\int_{\partial\Omega}A\nabla\varphi_{i,1}\cdot \overrightarrow{n}f_{i}^{2}\mathrm{d}S+\sum_{i=1}^{d}\int_{\partial\Omega}A\nabla\varphi_{i,2}\cdot \overrightarrow{n}f_{i}^{2}\mathrm{d}S\right)\label{12-17-1}\\
&=&0.\nonumber
\end{eqnarray}

Secondly, by (\ref{p20}) and (\ref{12-17-1}), we obtain that
\begin{eqnarray*}
\langle\mathcal{S} \mathbf{f}, \mathbf{f}\rangle&=&\sum_{i=1}^{2d}\int_{\Omega}A\nabla f_{i}\cdot\nabla f_{i}\mathrm{d}x-\sum_{i=1}^{2d}\int_{\Omega}\eta_{i}f_{i}^2\mathrm{d}x-\sum_{i=1}^{2d}\int_{\partial\Omega}A\nabla f_{i}\cdot \overrightarrow{n}f_{i}\mathrm{d}S\\
&=&\sum_{i=1}^{2d}\int_{\Omega}A\nabla f_{i}\cdot\nabla f_{i}\mathrm{d}x-\sum_{i=1}^{2d}\int_{\Omega}\eta_{i}f_{i}^2\mathrm{d}x-\frac{1}{2}\sum_{i=1}^{2d}\int_{\partial\Omega}A\nabla \Phi_{i}\cdot \overrightarrow{n}f_{i}^2\mathrm{d}S\\
&=&\sum_{i=1}^{2d}\int_{\Omega}A\nabla f_{i}\cdot\nabla f_{i}\mathrm{d}x-\sum_{i=1}^{2d}\int_{\Omega}\eta_{i}f_{i}^2\mathrm{d}x.
\end{eqnarray*}

Finally, it follows from the latter that
\begin{equation}\label{12-17-2}
\begin{array}{lll}
\displaystyle{\frac{\mathrm{d}}{\mathrm{d}t}}\langle\mathcal{S} \mathbf{f}, \mathbf{f}\rangle&=&
 2\displaystyle{\sum_{i=1}^{2d}\int_{\Omega}}A\nabla \partial_{t}f_{i}\cdot\nabla f_{i}\mathrm{d}x-\displaystyle{\sum_{i=1}^{2d}\int_{\Omega}}\partial_{t}\eta_{i}f_{i}^2\mathrm{d}x-
 2\displaystyle{\sum_{i=1}^{2d}\int_{\Omega}}\eta_{i}\partial_{t}f_{i}f_{i}\mathrm{d}x\\
&=& -2\displaystyle{\sum_{i=1}^{2d}\int_{\Omega}}\mathrm{div}(A\nabla f_{i}) \partial_{t}f_{i}\mathrm{d}x+2\displaystyle{\sum_{i=1}^{2d}\int_{\partial\Omega}}A\nabla f_{i}\cdot \overrightarrow{n} \partial_{t}f_{i}\mathrm{d}S\\
&&-\displaystyle{\sum_{i=1}^{2d}\int_{\Omega}}\partial_{t}\eta_{i}f_{i}^2\mathrm{d}x
-2\displaystyle{\sum_{i=1}^{2d}\int_{\Omega}}\eta_{i}\partial_{t}f_{i}f_{i}\mathrm{d}x\\
&=&2\langle\mathcal{S} \mathbf{f}, \mathbf{f}_{t}\rangle-\displaystyle{\sum_{i=1}^{2d}\int_{\Omega}}\partial_{t}\eta_{i}f_{i}^2\mathrm{d}x
+2\displaystyle{\sum_{i=1}^{2d}\int_{\partial\Omega}}A\nabla f_{i}\cdot \overrightarrow{n} \partial_{t}f_{i}\mathrm{d}S.
\end{array}
\end{equation}
By (\ref{12-17-2}), (\ref{p20}) and similar arguments as those to get (\ref{12-17-1}), we have that
\begin{eqnarray*}
\frac{\mathrm{d}}{\mathrm{d}t}\langle\mathcal{S} \mathbf{f}, \mathbf{f}\rangle
&=&2\langle\mathcal{S} \mathbf{f}, \mathbf{f}_{t}\rangle-\sum_{i=1}^{2d}\int_{\Omega}\partial_{t}\eta_{i}f_{i}^2\mathrm{d}x+\sum_{i=1}^{2d}\int_{\partial\Omega}A\nabla\Phi_{i}\cdot \overrightarrow{n}f_{i}\partial_{t}f_{i}\mathrm{d}S\\
&=&2\langle\mathcal{S} \mathbf{f}, \mathbf{f}_{t}\rangle+\langle\mathcal{S'} \mathbf{f}, \mathbf{f}\rangle.
\end{eqnarray*}

\textbf{Stage 2.} We show the following two inequalities:
\begin{equation}\label{p0009}
\left\{
\begin{array}{l}
\displaystyle\left|\frac{1}{2}\frac{\mathrm{d}}{\mathrm{d}t}\|\mathbf{f}\|^2+\mathbf{N}(t)\|\mathbf{f}\|^2\right|
\leq \left(\|a\|_{\infty}+\max_{1\leq i\leq2d}\|\nabla\Phi_{i}\|\|B\|_{\infty}\right)\|\mathbf{f}\|^{2}+\|B\|_{\infty}\|\nabla\mathbf{f}\|\|\mathbf{f}\|,\\
\\
\displaystyle\mathbf{N'}(t)\leq \frac{2\langle\mathcal{S} \mathbf{f}, \mathcal{A} \mathbf{f}\rangle+\langle\mathcal{S'} \mathbf{f}, \mathbf{f}\rangle}{\|\mathbf{f}\|^2}+2\left(\|a\|_{\infty}+\max_{1\leq i\leq2d}\|\nabla\Phi_{i}\|\|B\|_{\infty}\right)^{2}+2\|B\|_{\infty}^{2}\frac{\|\nabla\mathbf{f}\|^{2}}{\|\mathbf{f}\|^{2}},
\end{array}
\right.
\end{equation}
where $\mathbf{N}(t)=\langle\mathcal{S} \mathbf{f}, \mathbf{f}\rangle/\|\mathbf{f}\|^2$.

Indeed, on one hand, according to (\ref{p19}) and the first equality in (\ref{p0008}), we get that
\begin{equation}\label{sdfq1}
\frac{1}{2}\frac{\mathrm{d}}{\mathrm{d}t}\|\mathbf{f}\|^2+\langle\mathcal{S} \mathbf{f}, \mathbf{f}\rangle=\langle\mathbf{F}, \mathbf{f}\rangle.
\end{equation}
On the other hand, by (\ref{p19}) and (\ref{p0008}), we have that
\begin{eqnarray}\label{sdfq2}
\displaystyle\mathbf{N'}(t)\|\mathbf{f}\|^4&=&(2\langle\mathcal{S} \mathbf{f}, \mathbf{f}_{t}\rangle+\langle\mathcal{S'} \mathbf{f}, \mathbf{f}\rangle)\|\mathbf{f}\|^2-\langle\mathcal{S} \mathbf{f}, \mathbf{f}\rangle(-2\langle\mathcal{S} \mathbf{f}, \mathbf{f}\rangle+2\langle\mathbf{F}, \mathbf{f}\rangle)\nonumber\\
&=&(2\langle\mathcal{S} \mathbf{f}, \mathcal{A} \mathbf{f}\rangle+\langle\mathcal{S'} \mathbf{f}, \mathbf{f}\rangle)\|\mathbf{f}\|^2-2\|\mathcal{S}f\|^{2}\|\mathbf{f}\|^2+2\langle\mathcal{S} \mathbf{f}, \mathbf{F}\rangle\|\mathbf{f}\|^2\nonumber\\
&&+2\langle\mathcal{S} \mathbf{f}, \mathbf{f}\rangle^{2}-2\langle\mathcal{S} \mathbf{f}, \mathbf{f}\rangle\langle\mathbf{F}, \mathbf{f}\rangle\\
&=&(2\langle\mathcal{S} \mathbf{f}, \mathcal{A} \mathbf{f}\rangle+\langle\mathcal{S'} \mathbf{f}, \mathbf{f}\rangle)\|\mathbf{f}\|^2-2\|\mathcal{S}f-\frac{1}{2}\mathbf{F}\|^{2}\|\mathbf{f}\|^2\nonumber\\
&&+\displaystyle{\frac{1}{2}}\|\mathbf{F}\|^{2} \|\mathbf{f}\|^2+2\langle\mathcal{S} \mathbf{f}-\frac{1}{2}\mathbf{F}, \mathbf{f}\rangle^{2}-\frac{1}{2}\langle\mathbf{F}, \mathbf{f}\rangle^{2}\nonumber\\
&\leq&(2\langle\mathcal{S} \mathbf{f}, \mathcal{A} \mathbf{f}\rangle+\langle\mathcal{S'} \mathbf{f}, \mathbf{f}\rangle)\|\mathbf{f}\|^2+\|\mathbf{F}\|^{2}\|\mathbf{f}\|^2.\nonumber
\end{eqnarray}
Note that
\begin{eqnarray*}\label{sdfq3}
\|\mathbf{F}\|&=&\|(-af_{i}+\frac{1}{2}f_{i}B\cdot\nabla\Phi_{i}-B\cdot\nabla f_{i})_{1\leq i\leq2d}\|\nonumber\\
&\leq&\left(\|a\|_{\infty}+\max_{1\leq i\leq2d}\|\nabla\Phi_{i}\|\|B\|_{\infty}\right)\|\mathbf{f}\|+\|B\|_{\infty}\|\nabla\mathbf{f}\|.
\end{eqnarray*}
This, along with (\ref{sdfq1}) and (\ref{sdfq2}), yields (\ref{p0009}).\\

\textbf{Stage 3.} We claim that for $s:=s(A,\Omega,\widetilde{\omega})\in (0,1]$ sufficiently small,
\begin{equation}\label{12-17-3}
\eta_{i}\leq0,\;\;\;\langle\mathcal{S} \mathbf{f}, \mathbf{f}\rangle\geq0,
\end{equation}
and
\begin{eqnarray}\label{sdfqq3}
\langle\mathcal{S'} \mathbf{f}, \mathbf{f}\rangle+2\langle\mathcal{S} \mathbf{f}, \mathcal{A}\mathbf{f}\rangle\leq\frac{1+C_{0}(A,\Omega,\widetilde{\omega})}{\Gamma}\langle\mathcal{S} \mathbf{f}, \mathbf{f}\rangle+\frac{C(A,\Omega,\widetilde{\omega})}{h^{2}}\|\mathbf{f}\|^{2},
\end{eqnarray}
where $C_{0}(A,\Omega,\widetilde{\omega})\in (0,1)$ and $C(A,\Omega,\widetilde{\omega})>0$.

To this end, by (\ref{12-22-add-2}) and (\ref{12-22-add}), we firstly observe that
\begin{equation}\label{12-17-4}
	\displaystyle\eta_{i}:=\frac{1}{2}\partial_{t}\Phi_{i}+\frac{1}{4}A\nabla\Phi_{i}\cdot\nabla\Phi_{i}=
\begin{cases}
	\displaystyle\frac{s}{4\Gamma^{2}}\left(-2|\varphi_{i,1}|+sA\nabla\varphi_{i,1}\cdot\nabla\varphi_{i,1}\right),& 1\leq i\leq d,\\
\\ \displaystyle\frac{s}{4\Gamma^{2}}\left(-2|\varphi_{i-d,2}|+sA\nabla\varphi_{i-d,2}\cdot\nabla\varphi_{i-d,2}\right),& d+1\leq i\leq 2d.
\end{cases}
\end{equation}
By $(i), (ii), (iii)$ and $(v)$ in Proposition~\ref{lemma-1}, we have that
$$
|\nabla\varphi_{i,1}|^{2}\leq \frac{1}{c_{1}}|\varphi_{i,1}|\;\; \mbox{and}\;\;
|\nabla\varphi_{i,2}|^{2}\leq \frac{1}{c_{4}}|\varphi_{i,2}|
$$
for each $ 1\leq i\leq d$. Thus, for $s:=s(A,\Omega,\widetilde{\omega})\in(0,1]$ sufficiently small, we obtain that
\begin{equation}\label{12-17-5}
\begin{array}{lll}
&&-2|\varphi_{i,1}|+sA\nabla\varphi_{i,1}\cdot\nabla\varphi_{i,1}\\
&\leq& -2|\varphi_{i,1}|+s\lambda|\nabla\varphi_{i,1}|^{2}\leq 0,\;\;\;\;\;\forall\ 1\leq i\leq d
\end{array}
\end{equation}
and
\begin{equation}\label{12-17-6}
\begin{array}{lll}
&&-2|\varphi_{i-d,2}|+sA\nabla\varphi_{i-d,2}\cdot\nabla\varphi_{i-d,2}\\
&\leq& -2|\varphi_{i-d,2}|+s\lambda|\nabla\varphi_{i-d,2}|^{2}\leq 0,\;\;\;\;\;\forall\;d+1\leq i\leq 2d.
\end{array}
\end{equation}
Then (\ref{12-17-3}) follows from (\ref{12-17-4})-(\ref{12-17-6}) and (\ref{p0008}).\\

Next, we turn to the proof of (\ref{sdfqq3}). According to $(iii)$ in Proposition~\ref{lemma-2}, it is clear that
\begin{eqnarray}
&&\langle\mathcal{S'} \mathbf{f}, \mathbf{f}\rangle+2\langle\mathcal{S} \mathbf{f}, \mathcal{A}\mathbf{f}\rangle\nonumber\\
&=&(I_1)+(I_2)-\displaystyle{\frac{2}{\Gamma}\sum_{m=1}^{2d}\int_{\Omega}} \left(\eta_{m}+\frac{1}{4}A\nabla\Phi_{m}\cdot\nabla\Phi_{m}+\frac{s}{8}A_{ij}\partial_{x_{j}}\varphi_{m}\partial_{x_{i}}A_{kl}\partial_{x_{l}}\Phi_{m}\partial_{x_{k}}\Phi_{m}\right)  f_{m}^2 \mathrm{d}x\nonumber\\
&&-\displaystyle{\frac{2}{\Gamma}\sum_{m=1}^{2d}\int_{\Omega}} \left(\frac{s}{4}A\nabla^{2}\varphi_{m} A\nabla\Phi_{m}\cdot\nabla\Phi_{m}\right)  f_{m}^2 \mathrm{d}x,\label{p888}
\end{eqnarray}
where
\begin{equation*}
\begin{array}{lll}
(I_1) :&=&-2\displaystyle{\sum_{m=1}^{2d}\int_{\Omega}} A_{ij}\partial_{x_{j}}f_{m}\partial_{x_{i}}A_{kl}\partial_{x_{l}}f_{m}\partial_{x_{k}}\Phi_{m} \mathrm{d}x+\displaystyle{\sum_{m=1}^{2d}\int_{\Omega}}\partial_{x_{l}}A_{ij}\partial_{x_{j}}f_{m}A_{kl}\partial_{x_{i}}f_{m}\partial_{x_{k}}\Phi_{m} \mathrm{d}x\\
&&-2\displaystyle{\sum_{m=1}^{2d}\int_{\Omega}} A\nabla^{2} \Phi_{m} A\nabla f_{m}\cdot \nabla f_{m} \mathrm{d}x-\displaystyle{\sum_{m=1}^{2d}\int_{\Omega}} A\nabla f_{m}\cdot\nabla\left[\mathrm{div}(A\nabla \Phi_{m})\right] f_{m}\mathrm{d}x\\
\end{array}
\end{equation*}
and
\begin{equation*}
\begin{array}{lll}
(I_2) :&=& 2\displaystyle{\sum_{m=1}^{2d}\int_{\partial\Omega}}(A\nabla f_{m}\cdot \overrightarrow{n})( A\nabla\Phi_{m}\cdot\nabla f_{m})\mathrm{d}S-\displaystyle{\sum_{m=1}^{2d}\int_{\partial\Omega}}(A\nabla \Phi_{m} \cdot \overrightarrow{n}) (A\nabla f_{m}\cdot\nabla f_{m})\mathrm{d}S\\
&&+\displaystyle{\sum_{m=1}^{2d}\int_{\partial\Omega}}(A\nabla f_{m}\cdot \overrightarrow{n})\mathrm{div}(A\nabla \Phi_{m}) f_{m}\mathrm{d}S+\displaystyle{\sum_{m=1}^{2d}\int_{\partial\Omega}}(A\nabla \Phi_{m} \cdot \overrightarrow{n})\eta_{m} f^{2}_{m}\mathrm{d}S.
\end{array}
\end{equation*}
We will estimate the right hand sum in (\ref{p888}). This will be done by four steps as follows.\\

\textbf{Step 1.} We show that
\begin{equation}\label{12-17-9}
(I_1)\leq \frac{sC(A,\Omega,\widetilde{\omega})}{\Gamma}\sum_{m=1}^{2d}\int_\Omega |\nabla f_m|^2\mathrm{d}x
+\frac{sC(A,\Omega,\widetilde{\omega})}{h}\int_\Omega |\mathbf{f}|^2\mathrm{d}x.
\end{equation}
Indeed, after some simple calculations, we can directly check that
\begin{eqnarray*}
(I_1)&\leq& \displaystyle{\frac{sC(A,\Omega,\widetilde{\omega})}{\Gamma}\sum_{m=1}^{2d}\int_\Omega} |\nabla f_m|^2\mathrm{d}x
+\displaystyle{\frac{sC(A,\Omega,\widetilde{\omega})}{\Gamma}\sum_{m=1}^{2d}\int_\Omega} |\nabla f_m| |f_m|\mathrm{d}x\\
&\leq& \displaystyle{\frac{sC(A,\Omega,\widetilde{\omega})}{\Gamma}\sum_{m=1}^{2d}\int_\Omega} |\nabla f_m|^2\mathrm{d}x
+\displaystyle{\frac{sC(A,\Omega,\widetilde{\omega})}{h}\int_\Omega} |\mathbf{f}|^2\mathrm{d}x,
\end{eqnarray*}
which indicates (\ref{12-17-9}).\\

\textbf{Step 2.} We claim that
\begin{equation}\label{12-17-10}
\begin{array}{lll}
(I_2)&\leq& \displaystyle{\frac{s^2 C(A,\Omega,\widetilde{\omega})}{h^2}\sum_{m=1}^{d}\int_\Omega} |f_m|^2\mathrm{d}x\\
&&+\displaystyle{\frac{sC(A,\Omega,\widetilde{\omega})}{\Gamma}\sum_{m=1}^{d}\int_\Omega} |\nabla \Phi_m|^2 |f_m|^2\mathrm{d}x
+\displaystyle{\frac{sC(A,\Omega,\widetilde{\omega})}{\Gamma}\sum_{m=1}^{d}\int_\Omega} |\nabla f_m|^2\mathrm{d}x.
\end{array}
\end{equation}

Since $\psi_i=0$ on $\partial\Omega$ ($1\leq i\leq d$), it holds that $\nabla \psi_i=\partial_{\vec{n}} \psi_i \vec{n}$ on
$\partial\Omega\times (0,T)$. This yields that
\begin{equation}\label{12-17-11}
\nabla \Phi_m=\partial_{\vec{n}} \Phi_m \vec{n}\;\;\mbox{on}\;\;\partial\Omega\times (0,T),\;\;\;\;\forall\;1\leq m\leq 2d.
\end{equation}
Firstly, by (\ref{p20}), (\ref{12-17-11}), (\ref{12-22-add-2}) and (\ref{absd6}), we have that
\begin{equation}\label{12-17-12}
\begin{array}{lll}
&&2\displaystyle{\sum_{m=1}^{2d}\int_{\partial\Omega}}(A\nabla f_{m}\cdot \overrightarrow{n})( A\nabla\Phi_{m}\cdot\nabla f_{m})\mathrm{d}S\\
&=&2\displaystyle{\sum_{m=1}^{d}\int_{\partial\Omega}}\left(\frac{1}{2}A\nabla \Phi_{m}\cdot \overrightarrow{n}f_{m}\right)^{2}\partial_{\vec{n}}\Phi_{m}\mathrm{d}S
+2\displaystyle{\sum_{m=1}^{d}\int_{\partial\Omega}}\left(\frac{1}{2}A\nabla \Phi_{d+m}\cdot \overrightarrow{n}f_{d+m}\right)^{2}\partial_{\vec{n}}\Phi_{d+m}\mathrm{d}S\\
&=&2\displaystyle{\sum_{m=1}^{d}\int_{\partial\Omega}}\left(\frac{1}{2}A\nabla \Phi_{m}\cdot \overrightarrow{n}f_{m}\right)^{2}\partial_{\vec{n}}\Phi_{m}\mathrm{d}S
-2\displaystyle{\sum_{m=1}^{d}\int_{\partial\Omega}}\left(\frac{1}{2}A\nabla \Phi_{m}\cdot \overrightarrow{n}f_{m}\right)^{2}\partial_{\vec{n}}\Phi_{m}\mathrm{d}S\\
&=&0.
\end{array}
\end{equation}
Secondly, it follows from  (\ref{absd6}), (\ref{12-22-add-2}),  (\ref{12-17-11}) and (\ref{1.1})  that
\begin{equation}\label{12-17-13}
\begin{array}{lll}
&&-\displaystyle{\sum_{m=1}^{2d}\int_{\partial\Omega}}(A\nabla \Phi_{m} \cdot \overrightarrow{n}) (A\nabla f_{m}\cdot\nabla f_{m})\mathrm{d}S\\
&=&-\displaystyle{\sum_{m=1}^{d}\int_{\partial\Omega}}(A\nabla \Phi_{m} \cdot \overrightarrow{n}) (A\nabla f_{m}\cdot\nabla f_{m}-A\nabla f_{d+m}\cdot\nabla f_{d+m})\mathrm{d}S\\
&=&-\displaystyle{\sum_{m=1}^{d}\int_{\partial\Omega}}(A\nabla \Phi_{m} \cdot \overrightarrow{n})e^{\Phi_{m}} \left[A\nabla u\cdot\nabla u+uA\nabla u\cdot\nabla\Phi_{m}+\frac{1}{4}u^{2}A\nabla \Phi_{m}\cdot\nabla \Phi_{m}\right]\mathrm{d}S\\
&&+\displaystyle{\sum_{m=1}^{d}\int_{\partial\Omega}}(A\nabla \Phi_{m} \cdot \overrightarrow{n}) e^{\Phi_{m}}\left[A\nabla u\cdot\nabla u-uA\nabla u\cdot\nabla\Phi_{m}+\frac{1}{4}u^{2}A\nabla \Phi_{m}\cdot\nabla \Phi_{m}\right]\mathrm{d}S\\
&=&-2\displaystyle{\sum_{m=1}^{d}\int_{\partial\Omega}}(A\nabla \Phi_{m} \cdot \overrightarrow{n})e^{\Phi_{m}} uA\nabla u\cdot \overrightarrow{n}\partial_{\vec{n}}\Phi_{m}\mathrm{d}S\\
&=&0.
\end{array}
\end{equation}
Thirdly, by (\ref{12-22-add-2}) and (\ref{absd6}), we have that
\begin{equation}\label{12-17-14}
\begin{array}{lll}
&&\displaystyle{\sum_{m=1}^{2d}\int_{\partial\Omega}}(A\nabla \Phi_{m} \cdot \overrightarrow{n})\eta_{m} f^{2}_{m}\mathrm{d}S\\
&=&\displaystyle{\sum_{m=1}^{d}\int_{\partial\Omega}}\left(\frac{1}{2}\partial_{t}\Phi_{m}+\frac{1}{4}A\nabla\Phi_{m}\cdot\nabla\Phi_{m}\right)(A\nabla \Phi_{m} \cdot \overrightarrow{n})u^{2}e^{\Phi_{m}}\mathrm{d}S\\
&&+\displaystyle{\sum_{m=1}^{d}\int_{\partial\Omega}}\left(\frac{1}{2}\partial_{t}\Phi_{m+d}+\frac{1}{4}A\nabla\Phi_{m+d}\cdot\nabla\Phi_{m+d}\right)(A\nabla \Phi_{m+d} \cdot \overrightarrow{n})u^{2}e^{\Phi_{m+d}}\mathrm{d}S\\
&=&\displaystyle{\sum_{m=1}^{d}\int_{\partial\Omega}}\left(\frac{1}{2}\partial_{t}\Phi_{m}+\frac{1}{4}A\nabla\Phi_{m}\cdot\nabla\Phi_{m}\right)(A\nabla \Phi_{m} \cdot \overrightarrow{n})u^{2}e^{\Phi_{m}}\mathrm{d}S\\
&&+\displaystyle{\sum_{m=1}^{d}\int_{\partial\Omega}}\left(\frac{1}{2}\partial_{t}\Phi_{m}+\frac{1}{4}A\nabla\Phi_{m}\cdot\nabla\Phi_{m}\right)(A\nabla \Phi_{m+d} \cdot \overrightarrow{n})u^{2}e^{\Phi_{m}}\mathrm{d}S\\
&=&\displaystyle{\sum_{m=1}^{d}\int_{\partial\Omega}}\left(\frac{1}{2}\partial_{t}\Phi_{m}+\frac{1}{4}A\nabla\Phi_{m}\cdot\nabla\Phi_{m}\right)[A(\nabla \Phi_{m}+\nabla \Phi_{m+d}) \cdot \overrightarrow{n}]u^{2}e^{\Phi_{m}}\mathrm{d}S\\
&=&0.
\end{array}
\end{equation}
Finally, it follows from (\ref{12-22-add-2}), (\ref{1.1}) and (\ref{absd6}) that
\begin{eqnarray*}
&&\sum_{m=1}^{2d}\int_{\partial\Omega}(A\nabla f_{m}\cdot \overrightarrow{n})\mathrm{div}(A\nabla \Phi_{m}) f_{m}\mathrm{d}S\\
&=&\sum_{m=1}^{d}\int_{\partial\Omega}A\nabla \Phi_{m}\cdot \overrightarrow{n}\mathrm{div}(A\nabla \Phi_{m}) f^{2}_{m}\mathrm{d}S\\
&=&-\sum_{m=1}^{d}\int_{\Omega}|\mathrm{div}(A\nabla \Phi_{m})|^{2} f^{2}_{m}\mathrm{d}x+\sum_{m=1}^{d}\int_{\Omega}A\nabla \Phi_{m}\cdot \nabla\left[\mathrm{div}(A\nabla \Phi_{m}) f^{2}_{m}\right]\mathrm{d}x\\
&\leq&\frac{s^2C(A,\Omega,\widetilde{\omega})}{h^2}\sum_{m=1}^{d}\int_{\Omega}f^{2}_{m}\mathrm{d}x
+\frac{sC(A,\Omega,\widetilde{\omega})}{\Gamma}\sum_{m=1}^{d}\int_{\Omega}|\nabla \Phi_{m}|^2f^{2}_{m}\mathrm{d}x+\frac{sC(A,\Omega,\widetilde{\omega})}{\Gamma}\sum_{m=1}^{d}\int_{\Omega}|\nabla f_{m}|^{2}\mathrm{d}x.
\end{eqnarray*}
This, along with (\ref{12-17-12})-(\ref{12-17-14}), yields (\ref{12-17-10}).\\

\textbf{Step 3.} We show that for $s:=s(A,\Omega,\widetilde{\omega})\in (0,1]$ sufficiently small,
\begin{eqnarray}\label{p88804}
-\frac{2}{\Gamma}\sum_{m=1}^{2d}\int_{\Omega} \left(\eta_{m}+\frac{1}{8\lambda}|\nabla \Phi_{m}|^2\right)  f_{m}^2 \mathrm{d}x
\leq\frac{C(A,\Omega,\widetilde{\omega})}{h^{2}}\|\mathbf{f}\|^{2}+\frac{2-s/(c\lambda)}{\Gamma}\sum_{m=1}^{2d}\int_{\Omega} \left(-\eta_{m}\right)  f_{m}^2 \mathrm{d}x,
\end{eqnarray}
where $c:=2(c_{2}+c_{5}), c_{2}$ and $c_{5}$ are given by Proposition~\ref{lemma-1}.

Firstly, by $(i)$ and $(ii)$ in Proposition~\ref{lemma-1}, we have that $|\varphi_{m,1}|\leq c_{2}|\nabla \varphi_{m,1}|^{2}$ in $\mathcal{B}_{m}\cup \mathcal{D}_{m}$ for each $1\leq m\leq d$. This implies that
\begin{eqnarray*}
-|\nabla \Phi_{m}|^2&=&-\displaystyle{\frac{s^{2}}{\Gamma^{2}}}|\nabla \varphi_{m,1}|^{2}\leq-\displaystyle{\frac{s^{2}}{c_{2}\Gamma^{2}}}
|\varphi_{m,1}|\\
&=&\displaystyle{\frac{2s}{c_{2}}}\left(-\displaystyle{\frac{s}{2\Gamma^{2}}}|\varphi_{m,1}|\right)\leq \displaystyle{\frac{2s}{c_{2}}}\eta_{m}
\;\;\;\mbox{in}\;\;\mathcal{B}_{m}\cup \mathcal{D}_{m},\;\;\forall\;1\leq m\leq d.
\end{eqnarray*}
Here, we used (\ref{12-22-add-2}) and (\ref{12-22-add}). The latter estimate yields that
\begin{equation}\label{12-17-15}
-\frac{2}{\Gamma}\sum_{m=1}^{d}\int_{\mathcal{B}_{m}\cup \mathcal{D}_{m}} \left(\eta_{m}+\frac{1}{8\lambda}|\nabla \Phi_{m}|^2\right)  f_{m}^2 \mathrm{d}x\leq \frac{2-s/(2c_{2}\lambda)}{\Gamma}\sum_{m=1}^{d}\int_{\mathcal{B}_{m}\cup \mathcal{D}_{m}} \left(-\eta_{m}\right)  f_{m}^2 \mathrm{d}x.
\end{equation}
Secondly, according to $(iv)$ in  Proposition~\ref{lemma-1}, there is $c_{3}>0$ so that
for each $1\leq m\leq d$, there exists $1\leq p_m\leq d$ with $p_m\not=m$ satisfying
\begin{equation}\label{12-17-16}
\varphi_{m,1}-\varphi_{p_m,1}\leq-c_{3}\;\;\mbox{in}\;\;\mathcal{C}_{m}.
\end{equation}
Note that
\begin{equation}\label{12-17-17}
\left|\eta_{m}+\frac{1}{8\lambda}|\nabla \Phi_{m}|^2\right|\leq \frac{sC(A,\Omega,\widetilde{\omega})}{\Gamma^{2}}\;\;
\mbox{and}\;\;f_{m}^{2}=e^{s(\varphi_{m,1}-\varphi_{p_m,1})\frac{1}{\Gamma}}f_{p_m}^{2},\;\;\;\forall\;1\leq m\leq d.
\end{equation}
It follows from (\ref{12-17-16}) and (\ref{12-17-17}) that
\begin{equation}\label{12-17-18}
\begin{array}{lll}
-\displaystyle{\frac{2}{\Gamma}\sum_{m=1}^{d}\int_{\mathcal{C}_{m}}} \left(\eta_{m}+\frac{1}{8\lambda}|\nabla \Phi_{m}|^2\right)  f_{m}^2 \mathrm{d}x
&\leq& \displaystyle{\frac{sC(A,\Omega,\widetilde{\omega})}{\Gamma^{3}}\sum_{m=1}^{d}\int_{\mathcal{C}_{m}}} f_{m}^2 \mathrm{d}x\\
&\leq& \displaystyle{\frac{sC(A,\Omega,\widetilde{\omega})}{\Gamma^{3}}}e^{-c_{3}\frac{s}{\Gamma}}\sum_{m=1}^{d}\int_{\mathcal{C}_{m}} f_{p_m}^2 \mathrm{d}x\\
&\leq& \displaystyle{\frac{sC(A,\Omega,\widetilde{\omega})}{\Gamma^{3}}}e^{-c_{3}\frac{s}{\Gamma}}\|\mathbf{f}\|^{2}\\
&\leq& \displaystyle{\frac{C(A,\Omega,\widetilde{\omega})}{h^{2}}}\|\mathbf{f}\|^{2}.
\end{array}
\end{equation}
Thirdly, by $(v)$ in Proposition~\ref{lemma-1}, we get that
\begin{equation*}
|\varphi_{m,2}|\leq c_{5}|\nabla \varphi_{m,2}|^{2}\;\;\;\mbox{in}\;\;\vartheta_m,\;\;\forall\;1\leq m\leq d.
\end{equation*}
This implies that
\begin{eqnarray*}
&&-|\nabla \Phi_{m}|^2=-|\nabla \Phi_{m-d+d}|^2=-\displaystyle{\frac{s^{2}}{\Gamma^{2}}}|\nabla \varphi_{m-d,2}|^{2}\\
&\leq&-\displaystyle{\frac{s^{2}}{c_{5}\Gamma^{2}}}| \varphi_{m-d,2}|=\displaystyle{\frac{2s}{c_{5}}}\left(-\displaystyle{\frac{s}{2\Gamma^{2}}}
|\varphi_{m-d,2}|\right)\leq\frac{2s}{c_{5}}\eta_{m}\;\;\;\mbox{in}\;\;\vartheta_{m-d},\;\;\forall\;d+1\leq m\leq 2d,
\end{eqnarray*}
which indicates
\begin{equation}\label{12-17-21}
-\frac{2}{\Gamma}\sum_{m=d+1}^{2d}\int_{\vartheta_{m-d}} \left(\eta_{m}+\frac{1}{8\lambda}|\nabla \Phi_{m}|^2\right)  f_{m}^2 \mathrm{d}x\leq \frac{2-s/(2c_{5}\lambda)}{\Gamma}\sum_{m=d+1}^{2d}\int_{\vartheta_{m-d}} \left(-\eta_{m}\right)  f_{m}^2 \mathrm{d}x.
\end{equation}
Finally, according to $(vi)$ in Proposition~\ref{lemma-1}, there is $c_{6}>0$ so that
for each $1\leq m\leq d$,
\begin{equation}\label{12-17-22}
\varphi_{m,2}-\varphi_{m,1}\leq-c_{6}\;\;\;\mbox{in}\;\;\Omega\setminus\vartheta_m.
\end{equation}
Observe that
\begin{equation}\label{12-17-23}
\left|\eta_{d+m}+\frac{1}{8\lambda}|\nabla \Phi_{d+m}|^2\right|\leq \frac{sC(A,\Omega,\widetilde{\omega})}{\Gamma^{2}}
\;\;\mbox{and}\;\;f_{d+m}^{2}=e^{s(\varphi_{m,2}-\varphi_{m,1})\frac{1}{\Gamma}}f_{m}^{2},\;\;\;\forall\;1\leq m\leq d.
\end{equation}
It follows from (\ref{12-17-22}) and (\ref{12-17-23}) that
\begin{eqnarray*}
&&-\frac{2}{\Gamma}\sum_{m=d+1}^{2d}\int_{\Omega\setminus \vartheta_{m-d}} \left(\eta_{m}+\frac{1}{8\lambda}|\nabla \Phi_{m}|^2\right)  f_{m}^2 \mathrm{d}x\\
&\leq& \frac{sC(A,\Omega,\widetilde{\omega})}{\Gamma^{3}}e^{-c_{6}\frac{s}{\Gamma}}\sum_{m=1}^{d}\int_{\Omega\setminus \vartheta_m} f_{m}^2 \mathrm{d}x\\
&\leq& \frac{sC(A,\Omega,\widetilde{\omega})}{\Gamma^{3}}e^{-c_{6}\frac{s}{\Gamma}}\|\mathbf{f}\|^{2}\\
&\leq& \frac{C(A,\Omega,\widetilde{\omega})}{h^{2}}\|\mathbf{f}\|^{2}.
\end{eqnarray*}
This, along with (\ref{12-17-21}), (\ref{12-17-15}), (\ref{12-17-18}) and the first inequality in (\ref{12-17-3}), implies (\ref{p88804}).\\

\textbf{Step 4.} We end the proof of (\ref{sdfqq3}).

It follows from (\ref{p888}), (\ref{12-17-9}) and (\ref{12-17-10}) that
\begin{equation}\label{12-17-24}
\begin{array}{lll}
&&\langle\mathcal{S'} \mathbf{f}, \mathbf{f}\rangle+2\langle\mathcal{S} \mathbf{f}, \mathcal{A}\mathbf{f}\rangle\\
&\leq&\displaystyle{\frac{sC(A,\Omega,\widetilde{\omega})}{\Gamma}\sum_{m=1}^{d}\int_{\Omega}}|\nabla \Phi_{m}|^2f^{2}_{m}\mathrm{d}x
+\displaystyle{\frac{s C(A,\Omega,\widetilde{\omega})}{\Gamma}\sum_{m=1}^{2d}\int_{\Omega}}|\nabla f_{m}|^{2}\mathrm{d}x+
\displaystyle{\frac{s C(A,\Omega,\widetilde{\omega})}{h^{2}}}\|\mathbf{f}\|^{2}\\
&&-\displaystyle{\frac{2}{\Gamma}\sum_{m=1}^{2d}\int_{\Omega}} \left(\eta_{m}+\displaystyle{\frac{1}{4}}A\nabla\Phi_{m}\cdot\nabla\Phi_{m}+
\displaystyle{\frac{s}{8}}A_{ij}\partial_{x_{j}}\varphi_{m}\partial_{x_{i}}A_{kl}\partial_{x_{l}}\Phi_{m}\partial_{x_{k}}\Phi_{m}\right)  f_{m}^2 \mathrm{d}x\\
&&-\displaystyle{\frac{2}{\Gamma}\sum_{m=1}^{2d}\int_{\Omega}} \left(\displaystyle{\frac{s}{4}}A\nabla^{2}\varphi_{m} A\nabla\Phi_{m}\cdot\nabla\Phi_{m}\right)  f_{m}^2 \mathrm{d}x.
\end{array}
\end{equation}
Note that
\begin{equation*}
\lambda^{-1}\nabla\Phi_{m}\cdot\nabla\Phi_{m}\leq A\nabla\Phi_{m}\cdot\nabla\Phi_{m}\leq\lambda\nabla\Phi_{m}\cdot\nabla\Phi_{m},
\end{equation*}
\begin{equation*}
-\frac{s}{4}A\nabla^{2}\varphi_{m} A\nabla\Phi_{m}\cdot\nabla\Phi_{m}\leq sC(A,\Omega,\widetilde{\omega})|\nabla\Phi_{m}|^{2}
\end{equation*}
and
\begin{equation*}
-\frac{s}{8}A_{ij}\partial_{x_{j}}\varphi_{m}\partial_{x_{i}}A_{kl}\partial_{x_{l}}\Phi_{m}\partial_{x_{k}}\Phi_{m}\leq sC(A,\Omega,\widetilde{\omega})|\nabla\Phi_{m}|^{2}.
\end{equation*}
Then by (\ref{12-17-24}), we have that for $s:=s(A,\Omega,\widetilde{\omega})\in (0,1]$ sufficiently small,
\begin{equation}\label{p88803}
\begin{array}{lll}
&&\langle\mathcal{S'} \mathbf{f}, \mathbf{f}\rangle+2\langle\mathcal{S} \mathbf{f}, \mathcal{A}\mathbf{f}\rangle\\
&\leq&\displaystyle{\frac{s C(A,\Omega,\widetilde{\omega})}{\Gamma}\sum_{m=1}^{2d}\int_{\Omega}} |\nabla f_{m}|^{2}\mathrm{d}x+
\displaystyle{\frac{C(A,\Omega,\widetilde{\omega})}{h^{2}}}\|\mathbf{f}\|^{2}-
\displaystyle{\frac{2}{\Gamma}\sum_{m=1}^{2d}\int_{\Omega}} \left(\eta_{m}+\frac{1}{4\lambda}|\nabla \Phi_{m}|^2\right)  f_{m}^2 \mathrm{d}x\\
&&+\displaystyle{\frac{sC(A,\Omega,\widetilde{\omega})}{\Gamma}\sum_{m=1}^{d}\int_{\Omega}}|\nabla \Phi_{m}|^2f^{2}_{m}\mathrm{d}x\\
&\leq&\displaystyle{\frac{s C(A,\Omega,\widetilde{\omega})}{\Gamma}\sum_{m=1}^{2d}\int_{\Omega}}|\nabla f_{m}|^{2}\mathrm{d}x
+\displaystyle{\frac{ C(A,\Omega,\widetilde{\omega})}{h^{2}}}\|\mathbf{f}\|^{2}-\displaystyle{\frac{2}{\Gamma}\sum_{m=1}^{2d}\int_{\Omega}} \left(\eta_{m}+\frac{1}{8\lambda}|\nabla \Phi_{m}|^2\right)  f_{m}^2 \mathrm{d}x.
\end{array}
\end{equation}
It follows from (\ref{p88803}), (\ref{p88804}) and the second equality in (\ref{p0008}) that for  $s:=s(A,\Omega,\widetilde{\omega})\in (0,1]$ sufficiently small,
\begin{eqnarray*}
&&\langle\mathcal{S'} \mathbf{f}, \mathbf{f}\rangle+2\langle\mathcal{S} \mathbf{f}, \mathcal{A}\mathbf{f}\rangle\\
&\leq&\displaystyle{\frac{s C(A,\Omega,\widetilde{\omega})}{\Gamma}\sum_{m=1}^{2d}\int_{\Omega}}A\nabla f_m\cdot \nabla f_m\mathrm{d}x
+\displaystyle{\frac{ C(A,\Omega,\widetilde{\omega})}{h^{2}}}\|\mathbf{f}\|^{2}+\displaystyle{\frac{2-s/(c\lambda)}{\Gamma}\sum_{m=1}^{2d}\int_{\Omega}} (-\eta_{m})f_{m}^2 \mathrm{d}x\\
&\leq &\displaystyle{\frac{1+C_0(A,\Omega,\widetilde{\omega})}{\Gamma}}\langle\mathcal{S} \mathbf{f}, \mathbf{f}\rangle
+\displaystyle{\frac{C(A,\Omega,\widetilde{\omega})}{h^{2}}}\|\mathbf{f}\|^{2},
\end{eqnarray*}
where $C_0(A,\Omega,\widetilde{\omega})\in (0,1)$. Hence, (\ref{sdfqq3}) holds.\\

\textbf{Stage 4.} We show that for $s:=s(A,\Omega,\widetilde{\omega})\in (0,1]$ sufficiently small,
\begin{equation}\label{12-17-25}
\|\mathbf{f}(\cdot,T-lh)\|^{2(1+M)}\leq \|\mathbf{f}(\cdot,T)\|^2
\|\mathbf{f}(\cdot,T-2lh)\|^{2M} e^{(M+1)C(A,\Omega,\widetilde{\omega},l)(1+h^2\|a\|_\infty^2+\|B\|_\infty^2)},\;\;\forall\; M\geq M_l,
\end{equation}
where $l>1$, $0<h\leq \min\{T/(4l),1\}$, and $M_l$ will be precised later (see (\ref{12-17-26}) below).

Indeed, by (\ref{p0009}), the second equality in  (\ref{p0008}), (\ref{yu-11-28-2}), the first inequality in (\ref{12-17-3}) and (\ref{sdfqq3}), we have that
for $s:=s(A,\Omega,\widetilde{\omega})\in (0,1]$ sufficiently small,
\begin{eqnarray}\label{sdfqq4}
&&\left|\frac{1}{2}\frac{d}{dt}\|\mathbf{f}\|^2+\mathbf{N}(t)\|\mathbf{f}\|^2\right|\nonumber\\
&\leq&\left(\|a\|_{\infty}+\max_{1\leq i\leq2d}\|\nabla\Phi_{i}\|\|B\|_{\infty}+\frac{\lambda}{2}\|B\|^{2}_{\infty}\right)\|\mathbf{f}\|^{2}
+\frac{1}{2\lambda}\|\nabla\mathbf{f}\|^{2}\\
&\leq&\left(\|a\|_{\infty}+\max_{1\leq i\leq2d}\|\nabla\Phi_{i}\|\|B\|_{\infty}+\frac{\lambda}{2}\|B\|^{2}_{\infty}\right)\|\mathbf{f}\|^{2}
+\frac{1}{2}\mathbf{N}(t)\|\mathbf{f}\|^2\nonumber
\end{eqnarray}
and
\begin{equation}\label{sdfqq5}
\begin{array}{lll}
\mathbf{N'}(t)&\leq&  \left(2\lambda\|B\|_{\infty}^{2}+\displaystyle{\frac{1+C_{0}(A,\Omega,\widetilde{\omega})}{T-t+h}}\right)\mathbf{N}(t)\\
&&+\displaystyle{\frac{C(A,\Omega,\widetilde{\omega})}{h^{2}}}+2\left(\|a\|_{\infty}+\max_{1\leq i\leq2d}\|\nabla\Phi_{i}\|\|B\|_{\infty}\right)^{2}.
\end{array}
\end{equation}
Let $l>1$ and $0<h\leq \min\{T/(4l),1\}$. According to (\ref{sdfqq4}), (\ref{sdfqq5}) and Proposition~\ref{lemma-1234},
where $t_1 :=T-2lh, t_2 :=T-lh, t_3 :=T, S_0 :=C_0(A,\Omega,\widetilde{\omega}),
S_1 :=2\lambda\|B\|_{\infty}^{2}, y(t) :=\|\mathbf{f}(\cdot,t)\|^{2}, N(t) :=\mathbf{N}(t)$,
\begin{equation*}
F_{1}(t) :=\|a\|_{\infty}+\max_{1\leq i\leq2d}\|\nabla\Phi_{i}\|\|B\|_{\infty}+\frac{\lambda}{2}\|B\|^{2}_{\infty}
\end{equation*}
and
\begin{equation*}
F_{2}(t) :=\frac{C(A,\Omega,\widetilde{\omega})}{h^{2}}+2\left(\|a\|_{\infty}+\max_{1\leq i\leq2d}\|\nabla\Phi_{i}\|\|B\|_{\infty}\right)^{2},
\end{equation*}
it is clear that
\begin{eqnarray}\label{p88802Q}
\|\mathbf{f}(\cdot,T-lh)\|^{2(1+M)}\leq \|\mathbf{f}(\cdot,T)\|^{2}\|\mathbf{f}(\cdot,T-2lh)\|^{2M}K_{l,M}, \;\;\;\forall\; M\geq M_l.
\end{eqnarray}
Here,
\begin{equation}\label{12-17-26}
M_l:=3\frac{\int_{T-lh}^{T}\frac{e^{2\lambda\|B\|_\infty^2 t}}{(T-t+h)^{1+C_0(A,\Omega,\widetilde{\omega})}}\mathrm{d}t}{\int_{T-2lh}^{T-lh}\frac{e^{2\lambda\|B\|_\infty^2 t}}{(T-t+h)^{1+C_0(A,\Omega,\widetilde{\omega})}}\mathrm{d}t},
\end{equation}
\begin{eqnarray}\label{p88802Q222}
D_{l,M}:&=&12 l^2(M+1)C(A,\Omega,\widetilde{\omega})+ 12 lh(M+1)
\displaystyle{\int_{T-2lh}^{T}} \left(\|a\|_{\infty}+\displaystyle{\max_{1\leq i\leq2d}}\|\nabla\Phi_{i}\|\|B\|_{\infty}\right)^{2}\mathrm{d}t\\
&&+6lh(M+1)\|a\|_\infty+3\|B\|_\infty (M+1)\displaystyle{\int_{T-2lh}^{T}} (\max_{1\leq i\leq2d}\|\nabla\Phi_{i}\|)\mathrm{d}t
+15 \lambda l h(M+1)\|B\|_\infty^2,\nonumber
\end{eqnarray}
and
\begin{equation}\label{12-17-27}
K_{l,M}:=e^{D_{l,M}}(2l+1)^{3C_0(A,\Omega,\widetilde{\omega})(1+M)}.
\end{equation}
It follows from (\ref{p88802Q222}) and (\ref{12-17-27}) that
\begin{equation*}
D_{l,M}\leq (M+1)C(A,\Omega,\widetilde{\omega},l)\left(1+h^2\|a\|_\infty^2+\|B\|_\infty^2\right)
\end{equation*}
and
\begin{equation}\label{p88802m}
K_{l,M}\leq e^{(M+1)C(A,\Omega,\widetilde{\omega},l)\left(1+h^{2}\|a\|^{2}_{\infty}+\|B\|_{\infty}^{2}\right)}.
\end{equation}
This, along with (\ref{p88802Q}), implies (\ref{12-17-25}).\\

\textbf{Stage 5.} We end the proof of Theorem~\ref{jiudu4}.\\

It will be split into three steps as follows.\\

\textbf{Step 1.} We show that for $s:=s(A,\Omega,\widetilde{\omega})\in (0,1]$ sufficiently small, there are four constants $\widetilde{M}:=\widetilde{M}(A,\Omega,\widetilde{\omega})>0$, \color{black}
$\widetilde{C_1}:=\widetilde{C_1}(A,\Omega,\widetilde{\omega})>1$, $l_0:=l_0(A,\Omega,\widetilde{\omega})>1$ and $\mu:=\mu(\Omega,\widetilde{\omega})>0$
so that
\begin{equation}\label{AAAA}
\begin{array}{lll}
&&\displaystyle{\left(\int_{\Omega}|u(x,T)|^{2}\mathrm{d}x\right)^{1+\widetilde{M}}}\\
&\leq& e^{(1+\widetilde{M})\widetilde{C_1}\left[1+\|a\|_{\infty}^{2/3}+\|B\|_{\infty}^{2}+T\left(\|a\|_{\infty}
+\frac{\lambda}{2}\|B\|^{2}_{\infty}\right)\right]}\displaystyle{\left(\int_{\Omega} |u(x,0)|^{2}\mathrm{d}x\right)^{\widetilde{M}}}\\
&&\times \displaystyle{\left(e^{\frac{s\mu}{2h}}\int_{\widetilde{\omega}} |u(x,T)|^{2}dx+e^{-\frac{s\mu}{2h}}\int_{\Omega} |u(x,0)|^{2}\mathrm{d}x\right)},
\end{array}
\end{equation}
where $0<h\leq\min\{T/(4l_0), 1\}$, $\|a\|^{2/3}_{\infty}h<1$ and $l_{0}h\|B\|_{\infty}^2<1$.

To this end, firstly, on one hand, by Lemma~\ref{lemma-1.1}, we have that for any $0\leq t_1\leq t_2\leq T$,
\begin{eqnarray}\label{p88802km}
\|u(\cdot,t_2)\|\leq e^{(t_{2}-t_{1})\left(\|a\|_{\infty}+\frac{\lambda}{2}\|B\|^{2}_{\infty}\right)}\|u(\cdot,t_1)\|.
\end{eqnarray}
This yields that
\begin{equation}\label{12-22-1}
\begin{array}{lll}
\displaystyle{\int_{\Omega}} |u(x,T)|^{2}\mathrm{d}x&\leq& e^{2lh\left(\|a\|_{\infty}+\frac{\lambda}{2}\|B\|^{2}_{\infty}\right)}
\displaystyle{\int_{\Omega}} |u(x,T-lh)|^{2}e^{\frac{s}{(l+1)h}\varphi_{1,1}}e^{-\frac{s}{(1+l)h}\varphi_{1,1}}\mathrm{d}x\\
&\leq& e^{T\left(\|a\|_{\infty}+\frac{\lambda}{2}\|B\|^{2}_{\infty}\right)}e^{\frac{s C_1}{(l+1)h}}
\displaystyle{\int_{\Omega}} |f_{1}(x,T-lh)|^{2}\mathrm{d}x,
\end{array}
\end{equation}
where $l>1, 0<h\leq \min\{T/(4l),1\}$ and $C_1:=C_1(\Omega,\widetilde{\omega})>0$.
On the other hand, since $\varphi_{i,2}\leq \varphi_{i,1}$ in $\Omega$ for each $1\leq i\leq d$ (see (\ref{12-22-add})),
it follows from (\ref{p88802Q}) that for $s:=s(A,\Omega,\widetilde{\omega})\in (0,1]$ sufficiently small,
\begin{eqnarray}\label{p88802Qq}
\|f_{1}(\cdot,T-lh)\|^{2(1+M)}\leq \sum_{i=1}^{d}\|f_{i}(\cdot,T)\|^{2}
\left(\sum_{i=1}^{d}\|f_{i}(\cdot,T-2lh)\|^{2}\right)^{M}K_{l,M}, \;\;\;\forall\;M\geq M_l,
\end{eqnarray}
where $l>1$ and $0<h\leq\min\{T/(4l),1\}$. Moreover, noting that $\Phi_i\leq 0$ in
$\Omega\times [0,T]$ for each $1\leq i\leq d$ (see (\ref{12-22-add-2}) and (\ref{12-22-add})), by (\ref{p88802km}), we get that
\begin{eqnarray}\label{p88802Qqq}
\|f_{i}(\cdot,T-2lh)\|^{2}\leq e^{2T\left(\|a\|_{\infty}+\frac{\lambda}{2}\|B\|^{2}_{\infty}\right)}\|u(\cdot,0)\|^{2}.
\end{eqnarray}

Secondly, according to Proposition~\ref{lemma-absd1}, there exist two positive constants $r:=r(\Omega,\widetilde{\omega})$
and $\mu:=\mu(\Omega,\widetilde{\omega})\in (0,1)$ so that
\begin{equation*}
\omega_{i,r}:=\{x: |x-p_{i}|<r\}\subset \widetilde{\omega}\;\;\mbox{and}\;\;\varphi_{i,1}\leq -\mu\;\;\mbox{in}\;\;
\Omega\backslash\omega_{i,r},\;\;\;\forall\;1\leq i\leq d.
\end{equation*}
These, along with (\ref{12-22-add-2}) and Lemma~\ref{lemma-1.1}, imply that
\begin{equation}\label{p88802Qqqq}
\begin{array}{lll}
\|f_{i}(\cdot,T)\|^{2}&=& \displaystyle{\int_{\omega_{i,r}}} |u(x,T)|^{2}e^{\frac{s}{h}\varphi_{i,1}}\mathrm{d}x+\displaystyle{\int_{\Omega\backslash\omega_{i,r}}} |u(x,T)|^{2}e^{\frac{s}{h}\varphi_{i,1}}\mathrm{d}x\\
&\leq& \displaystyle{\int_{\widetilde{\omega}}} |u(x,T)|^{2}\mathrm{d}x+e^{-\frac{s\mu}{h}}e^{2T\left(\|a\|_{\infty}+\frac{\lambda}{2}\|B\|^{2}_{\infty}\right)}\displaystyle{\int_{\Omega}} |u(x,0)|^{2}\mathrm{d}x
\end{array}
\end{equation}
for each $1\leq i\leq d$. Here we used the fact that $\varphi_{i,1}\leq 0$  in $\Omega$ for each $1\leq i\leq d$ (see (\ref{12-22-add})).

Finally, it follows from  (\ref{12-22-1})-(\ref{p88802Qqqq}) that for $s:=s(A,\Omega,\widetilde{\omega})\in (0,1]$ sufficiently small,
\begin{eqnarray}\label{bbbb000}
&&\left(\int_{\Omega}|u(x,T)|^{2}\mathrm{d}x\right)^{1+M}\nonumber\\
&\leq&e^{(1+M)T\left(\|a\|_{\infty}+\frac{\lambda}{2}\|B\|^{2}_{\infty}\right)}e^{\frac{s C_1(1+M)}{(1+l)h}}\left(\int_{\Omega} |f_{1}(x,T-lh)|^{2}\mathrm{d}x\right)^{1+M}\nonumber\\
&\leq& K_{l,M}e^{(1+M)T\left(\|a\|_{\infty}+\frac{\lambda}{2}\|B\|^{2}_{\infty}\right)}e^{\frac{s C_1(1+M)}{(1+l)h}}\left(\sum_{i=1}^{d}\|f_{i}(\cdot,T-2lh)\|^{2}\right)^{M}
\left(\sum_{i=1}^{d}\|f_{i}(\cdot,T)\|^{2}\right)\\
&\leq& dK_{l,M}e^{(1+M)T\left(\|a\|_{\infty}+\frac{\lambda}{2}\|B\|^{2}_{\infty}\right)}e^{\frac{s C_1(1+M)}{(1+l)h}}
\left(de^{2T\left(\|a\|_{\infty}+\frac{\lambda}{2}\|B\|^{2}_{\infty}\right)}\int_{\Omega} |u(x,0)|^{2}\mathrm{d}x\right)^{M}\nonumber\\
&&\times
\left(\int_{\widetilde{\omega}} |u(x,T)|^{2}\mathrm{d}x+e^{-\frac{s\mu}{h}}e^{2T\left(\|a\|_{\infty}+\frac{\lambda}{2}\|B\|^{2}_{\infty}\right)}\int_{\Omega} |u(x,0)|^{2}\mathrm{d}x\right),\;\;\;\;\;\;\forall\;M\geq M_l.\nonumber
\end{eqnarray}
On one hand, when $0<h\leq\min\{ T/(4l), 1\}$ and $ lh\|B\|_{\infty}^2<1$, we obtain from (\ref{12-17-26}) that
\begin{equation}\label{12-22-3}
M_{l}\leq 3e^{4\lambda}\frac{(1+l)^{C_{0}(A, \Omega, \widetilde{\omega})}}{1-(\frac{2}{3})^{C_{0}(A, \Omega, \widetilde{\omega})}}:=\widetilde{M}_{l}.
\end{equation}
Obviously, there exists a constant $l_0:=l_0(A,\Omega,\widetilde{\omega})>1$ so that
\begin{eqnarray}\label{bbbb}
\frac{s C_1(1+\widetilde{M}_{l})}{(1+l)h}-\frac{s\mu}{h}\leq-\frac{s\mu}{2h}\;\;\;\mbox{for each}\;\;l\geq l_0.
\end{eqnarray}
Here, we used the fact that $C_0(A,\Omega,\widetilde{\omega})\in (0,1)$. On the other hand, when $0<h\leq\min\{T/(4l), 1\}$ and $\|a\|^{2/3}_{\infty}h<1$, we get from (\ref{p88802m}) that
\begin{eqnarray}\label{p88802m999}
K_{l,M}\leq e^{(M+1)C(A,\Omega, \widetilde{\omega}, l)\left(1+\|a\|^{2/3}_{\infty}+\|B\|_{\infty}^{2}\right)}.
\end{eqnarray}
It follows from (\ref{12-22-3})-(\ref{p88802m999}) that when $0<h\leq\min\{T/(4l_0), 1\}$,
$\|a\|^{2/3}_{\infty}h<1$ and  $l_{0}h\|B\|_{\infty}^2<1$,
\begin{eqnarray*}
M_{l_0}\leq \widetilde{M}_{l_0},\;\;\frac{s C_1(1+\widetilde{M}_{l_0})}{(1+l_0)h}-\frac{s\mu}{h}\leq-\frac{s\mu}{2h}
\;\;\mbox{and}\;\;K_{l_0,\widetilde{M}_{l_0}}\leq e^{C(A,\Omega, \widetilde{\omega})\left(1+\|a\|^{2/3}_{\infty}+\|B\|_{\infty}^{2}\right)}.
\end{eqnarray*}
These, along with (\ref{bbbb000}) (where $l$ and $M$ are replaced by $l_0$ and $\widetilde{M}_{l_0}$, respectively), imply (\ref{AAAA}).\\

\textbf{Step 2.} We show that for any  $h\in (0, 1]$ such that $h\geq T/(4l_{0})$ or $\|a\|^{2/3}_{\infty}h\geq1$ or $l_{0}h\|B\|_{\infty}^2\geq1$,
\begin{eqnarray}\label{CCCCC}
\int_\Omega |u(x,T)|^2\mathrm{d}x\leq e^{\widetilde{C_{2}}\left[\frac{1}{T}+\|a\|_{\infty}^{2/3}+\|B\|_{\infty}^{2}
+T\left(\|a\|_{\infty}+\frac{\lambda}{2}\|B\|^{2}_{\infty}\right)\right]}e^{-\frac{s\mu}{2h}}\int_{\Omega}|u(x,0)|^{2}
\mathrm{d}x
\end{eqnarray}
where $\widetilde{C_{2}}:=\widetilde{C_{2}}(A, \Omega, \widetilde{\omega})>0$ and $s\in (0,1]$.

Indeed, by (\ref{p88802km}), we can directly check that
\begin{equation}\label{12-23-3}
\begin{array}{lll}
\displaystyle{\int_\Omega} |u(x,T)|^2\mathrm{d}x&\leq& e^{2T\left(\|a\|_\infty+\frac{\lambda}{2}\|B\|_\infty^2\right)}\displaystyle{\int_\Omega} |u(x,0)|^2\mathrm{d}x\\
&\leq& e^{2T\left(\|a\|_\infty+\frac{\lambda}{2}\|B\|_\infty^2\right)}
e^{\frac{s\mu}{2}\left(\frac{4l_0}{T}+\|a\|_\infty^{2/3}+\|B\|_\infty^2\right)}e^{-\frac{s\mu}{2h}}
\displaystyle{\int_\Omega} |u(x,0)|^2\mathrm{d}x,
\end{array}
\end{equation}
where $h\in (0, 1]$ satisfies $h\geq T/(4l_{0})$ or $\|a\|^{2/3}_{\infty}h\geq1$ or $l_{0}h\|B\|_{\infty}^2\geq1$.
Then (\ref{CCCCC}) follows from (\ref{12-23-3}) immediately.\\

\textbf{Step 3.} We complete the proof of Theorem~\ref{jiudu4}.

By (\ref{AAAA}) and (\ref{CCCCC}), one can conclude that for $s:=s(A,\Omega,\widetilde{\omega})\in (0,1]$
sufficiently small and for any $h\in (0, 1]$,
\begin{eqnarray}\label{12-26-3}
\left(\int_{\Omega}|u(x,T)|^{2}\mathrm{d}x\right)^{1+\widetilde{M}}
&\leq& e^{(1+\widetilde{M})\widetilde{C_{3}}\left[1+\frac{1}{T}+\|a\|_{\infty}^{2/3}
+\|B\|_{\infty}^{2}+T\left(\|a\|_{\infty}+\frac{\lambda}{2}\|B\|^{2}_{\infty}\right)\right]}\left(\int_{\Omega} |u(x,0)|^{2}\mathrm{d}x\right)^{\widetilde{M}}\nonumber\\
&&\times \left(e^{\frac{s\mu}{2h}}\int_{\widetilde{\omega}} |u(x,T)|^{2}\mathrm{d}x+e^{-\frac{s\mu}{2h}}\int_{\Omega} |u(x,0)|^{2}\mathrm{d}x\right)
\end{eqnarray}
where $\widetilde{C_{3}}:=\widetilde{C_{3}}(A, \Omega, \widetilde{\omega})>2.$
Now, choose $h\in (0, 1]$ such that
\begin{eqnarray}\label{12-26-4}
&&\frac{1}{2}\left(\int_{\Omega}|u(x,T)|^{2}\mathrm{d}x\right)^{\widetilde{M}+1}\\
&=&e^{-\frac{s\mu}{2h}}e^{(1+\widetilde{M})\widetilde{C}_{3}\left[1+\frac{1}{T}+\|a\|_{\infty}^{2/3}
+\|B\|_{\infty}^{2}+T\left(\|a\|_{\infty}
+\frac{\lambda}{2}\|B\|^{2}_{\infty}\right)\right]}
\left(\int_{\Omega}|u(x,0)|^{2}\mathrm{d}x\right)^{\widetilde{M}+1}.\nonumber
\end{eqnarray}
It follows from (\ref{12-26-3}) and (\ref{12-26-4}) that
\begin{eqnarray*}
&&\left(\int_{\Omega}|u(x,T)|^{2}\mathrm{d}x\right)^{1+\widehat{M}}\\
&\leq& e^{(1+\widehat{M})\widetilde{C_{4}}\left[1+\frac{1}{T}+\|a\|_{\infty}^{2/3}+\|B\|_{\infty}^{2}
+T\left(\|a\|_{\infty}+\frac{\lambda}{2}\|B\|^{2}_{\infty}\right)\right]}
\left(\int_{\Omega}|u(x,0)|^{2}\mathrm{d}x\right)^{\widehat{M}}\int_{\widetilde{\omega}} |u(x,T)|^{2}\mathrm{d}x,
\end{eqnarray*}
where $\widehat{M}:=2\widetilde{M}+1$ and $\widetilde{C}_4:=\widetilde{C}_4(A,\Omega,\widetilde{\omega})>0$. Hence,
the result follows from the latter inequality immediately.\\

In summary, we finish the proof of Theorem~\ref{jiudu4}.\qed

\section{Proof of Theorem~\ref{Thm1}}\label{finalproof}

Now, we are able to present the proof of Theorem~\ref{Thm1} by using a telescoping series method.
For the convenience of the reader, we provide here the detailed
computation, although it is more or less similar to that of \cite[Theorem 4]{Phung-Wang-Zhang}.

\medskip

\noindent\textbf{Proof\ of\ Theorem~\ref{Thm1}}.
According to Theorem~\ref{jiudu4} and Young's inequality, it is clear that
for any $0\leq t_{1}<t_{2}\leq T$,
 \begin{equation}\label{2019-7-9}
 \|u(\cdot, t_{2})\|\leq\varepsilon
 \|u(\cdot, t_{1})\|+
 \frac{\mathcal{K}_{1}}{\varepsilon^{\alpha}}e^{\frac{\mathcal{K}_{2}}{t_{2}-t_{1}}}
 \|u(\cdot, t_{2})\|_{L^{2}(\widetilde{\omega})}, \ \ \ \forall \ \varepsilon>0.
 \end{equation}
Here, $\widetilde{\omega} \Subset\omega \subset\Omega$, $\mathcal{K}_{1}:= e^{\frac{\mathcal{K}}{2(1-\beta)}
{\left[1+\|a\|_{\infty}^{2/3}+\|B\|_{\infty}^{2}+T\left(\|a\|_{\infty}+\|B\|^{2}_{\infty}\right)\right]}}$,
$\mathcal{K}_{2}:= e^{\frac{\mathcal{K}}{2(1-\beta)}}$, and $\alpha:=\frac{\beta}{1-\beta}$
with $\mathcal{K}, \beta$ being the same constants appearing in Theorem~\ref{jiudu4}.
By Nash's inequality and Poincar$\mathrm{\acute{e}}$'s inequality,  we have that
\begin{equation}\label{2019-7-9090}
 \|g\|_{L^{2}(\widetilde{\omega})}\leq \mathcal{K}_{3}\|g\|^{\theta}_{L^{1}(\omega)}\|\nabla g\|^{1-\theta}, \ \ \ \forall \ g\in H_{0}^{1}(\Omega),
 \end{equation}
 where $\widetilde{\omega} \Subset\omega$, $\mathcal{K}_{3}:=\mathcal{K}_{3}(\Omega, \omega, \widetilde{\omega})>0$ and $\theta:=2/(2+N)$.
 Let $\eta\in C_{0}^{\infty}(\Omega)$ be such that $0\leq\eta\leq 1$ and $\eta\equiv1$ on $\widetilde{\omega}$.
 It follows from (\ref{2019-7-9090}) (where $g$ is replaced by $\eta u(\cdot,t_2)$) and Lemma~\ref{lemma-1.1} that
 \begin{eqnarray*}
 \|u(\cdot, t_{2})\|_{L^{2}(\widetilde{\omega})}&=& \|\eta u(\cdot,t_2)\|_{L^{2}(\widetilde{\omega})}\leq \mathcal{K}_{3}\|u(\cdot, t_{2})\|^{\theta}_{L^{1}(\omega)}\|\nabla (\eta u(\cdot,t_2))\|^{1-\theta}\\
 &\leq& \mathcal{K}_{3}\|u(\cdot, t_{2})\|^{\theta}_{L^{1}(\omega)}\left(\mathcal{K}_{4}\|u(\cdot, t_{2})\|+\|\nabla u(\cdot, t_{2})\|\right)^{1-\theta}\\
 &\leq& \mathcal{K}_{3}(\mathcal{K}_{4}\mathcal{K}_{5})^{1-\theta}\|u(\cdot, t_{2})\|^{\theta}_{L^{1}(\omega)}\left(\|u(\cdot, t_{1})\|+\|\nabla u(\cdot, t_{2})\|\right)^{1-\theta},
 \end{eqnarray*}
 where $\mathcal{K}_{4}:=\mathcal{K}_{4}(\Omega,\omega,\widetilde{\omega})>1$ and $\mathcal{K}_{5}:= e^{T\left(2\|a\|_{\infty}+\lambda\|B\|^{2}_{\infty}\right)
}.$ This, along with  Young's inequality, implies that
\begin{eqnarray}\label{2019-7-9090888}
 \|u(\cdot, t_{2})\|_{L^{2}(\widetilde{\omega})}\leq \delta\left(\|u(\cdot, t_{1})\|+\|\nabla u(\cdot, t_{2})\|\right)+\frac{\mathcal{K}_{6}}{\delta^{N/2}} \|u(\cdot, t_{2})\|_{L^{1}(\omega)}, \ \ \ \forall\ \delta>0.
 \end{eqnarray}
Here, $\mathcal{K}_{6}:=\mathcal{K}_3\left(\mathcal{K}_3 \mathcal{K}_4 \mathcal{K}_5\right)^{N/2}$. By Lemma~\ref{lemma-1.2}, we have that
\begin{eqnarray}\label{2019-7-9090999}
\|\nabla u(\cdot, t_{2})\|\leq\frac{\mathcal{K}_{7}}{(t_{2}-t_{1})^{1/2}} \|u(\cdot, t_{1})\|,
\end{eqnarray}
where $\mathcal{K}_{7}:=2\lambda^{3/2} e^{T\left[3\|a\|_{\infty}+\lambda\|B\|^{2}_{\infty}\right]}
.$
Hence, from (\ref{2019-7-9}), (\ref{2019-7-9090888}) and (\ref{2019-7-9090999}) with
\begin{eqnarray*}
&&\frac{\mathcal{K}_{1}}{\varepsilon^{\alpha}}e^{\frac{\mathcal{K}_{2}}{t_{2}-t_{1}}}\delta\left(1+\frac{\mathcal{K}_{7}}{(t_{2}-t_{1})^{1/2}}\right)
=\varepsilon,
\end{eqnarray*}
 we get that
\begin{equation}\label{2019-7-90901010}
\begin{array}{lll}
\displaystyle\|u(\cdot, t_{2})\|
&\leq&
\displaystyle 2\varepsilon
 \|u(\cdot, t_{1})\|+ \frac{\mathcal{K}_{1}\mathcal{K}_{6}}{\varepsilon^{\alpha}}e^{\frac{\mathcal{K}_{2}}{t_{2}-t_{1}}}\left[\frac{\mathcal{K}_{1}}{\varepsilon^{\alpha+1}}e^{\frac{\mathcal{K}_{2}}{t_{2}-t_{1}}}\left(1+\frac{\mathcal{K}_{7}}{(t_{2}-t_{1})^{1/2}}\right)\right]^{N/2} \|u(\cdot, t_{2})\|_{L^{1}(\omega)}\\
 &\leq&\displaystyle 2\varepsilon
 \|u(\cdot, t_{1})\|+ \frac{2^{N/2}\mathcal{K}^{1+N/2}_{1}\mathcal{K}_{6}\mathcal{K}^{N/2}_{7}}{\varepsilon^{\alpha+(\alpha+1)N/2}}e^{(\frac{3N}{4}+1)\frac{\mathcal{K}_{2}}{t_{2}-t_{1}}} \|u(\cdot, t_{2})\|_{L^{1}(\omega)}\\
 &\leq&\displaystyle 2\varepsilon
 \|u(\cdot, t_{1})\|+ \frac{\mathcal{K}_{8}}{(2\varepsilon)^{\gamma}}e^{\frac{\mathcal{K}_{9}}{t_{2}-t_{1}}} \|u(\cdot, t_{2})\|_{L^{1}(\omega)}, \ \ \forall \ \varepsilon>0,
\end{array}
\end{equation}
where $\gamma:=\alpha+(\alpha+1)N/2$, $\mathcal{K}_{8}:=2^{\alpha(1+N/2)+N}\mathcal{K}^{1+N/2}_{1}\mathcal{K}_{6}\mathcal{K}^{N/2}_{7}$ and $\mathcal{K}_{9}:=(1+3N/4)\mathcal{K}_{2}.$

On the other hand, let $E$ be a subset of positive measure in $(0,T)$.
Let $\ell_{0}$ be a density point of $E$. According to Proposition 2.1 in \cite{Phung-Wang},
for each $\kappa>1$, there exists $\ell_{1}\in (\ell_{0},T)$, depending on $\kappa$ and $E$,
so that the sequence $\{\ell_{m}\}_{m\geq1}$, given by
$$
\ell_{m+1}:=\ell_{0}+\frac{1}{\kappa^{m}}(\ell_{1}-\ell_{0}),
$$
satisfies that
 \begin{equation}\label{3.2525251}
\ell_{m}-\ell_{m+1}\leq 3|E\cap(\ell_{m+1},\ell_{m})|.
 \end{equation}
Next, let $0<\ell_{m+2}<\ell_{m+1}\leq t<\ell_{m}<\ell_{1}<T$. It follows from (\ref{2019-7-90901010}) that
\begin{equation}\label{3.2525252}
 \|u(\cdot, t)\|\leq\varepsilon
 \|u(\cdot, \ell_{m+2})\|+
 \frac{\mathcal{K}_{8}}{\varepsilon^{\gamma}}e^{\frac{\mathcal{K}_{9}}{t-\ell_{m+2}}}
 \|u(\cdot, t)\|_{L^{1}(\omega)}, \ \ \ \forall \ \varepsilon>0.
 \end{equation}
By Lemma~\ref{lemma-1.1}, we have that
\begin{eqnarray*}
\|u(\cdot, \ell_{m})\|\leq e^{T\left(\|a\|_{\infty}+\frac{\lambda}{2}\|B\|^{2}_{\infty}\right)}\|u(\cdot,t)\|.
\end{eqnarray*}
This, along with (\ref{3.2525252}), implies that
\begin{eqnarray*}
\|u(\cdot, \ell_{m})\|
\leq
e^{T\left(\|a\|_{\infty}+\frac{\lambda}{2}\|B\|^{2}_{\infty}\right)}
\left(\varepsilon
 \|u(\cdot, \ell_{m+2})\|+
 \frac{\mathcal{K}_{8}}{\varepsilon^{\gamma}}e^{\frac{\mathcal{K}_{9}}{t-\ell_{m+2}}}
 \|u(\cdot, t)\|_{L^{1}(\omega)}\right), \ \ \ \forall \ \varepsilon>0,
\end{eqnarray*}
which indicates that
$$\|u(\cdot, \ell_{m})\|
\leq \varepsilon
 \|u(\cdot, \ell_{m+2})\|+
 \frac{\mathcal{K}_{10}}{\varepsilon^{\gamma}}e^{\frac{\mathcal{K}_{9}}{t-\ell_{m+2}}}
 \|u(\cdot, t)\|_{L^{1}(\omega)}, \ \forall\ \varepsilon>0,
 $$
where $$\mathcal{K}_{10}:=\left[e^{T\left(\|a\|_{\infty}+\frac{\lambda}{2}\|B\|^{2}_{\infty}\right)}\right]^{1+\gamma}\mathcal{K}_{8}.$$
Integrating the latter inequality over $ E\cap(\ell_{m+1},\ell_{m})$, we get that
\begin{equation}\label{3.2525253}
\begin{array}{lll}
 \displaystyle{}|E\cap(\ell_{m+1},\ell_{m})|\|u(\cdot, \ell_{m})\|
 &\leq&\displaystyle{}\varepsilon |E\cap(\ell_{m+1},\ell_{m})|\|u(\cdot,\ell_{m+2})\|\\
 &&\displaystyle{}+\frac{\mathcal{K}_{10}}
 {\varepsilon^{\gamma}}e^{\frac{\mathcal{K}_{9}}{\ell_{m+1}-\ell_{m+2}}}
 \int_{\ell_{m+1}}^{\ell_{m}}\chi_{E}\|u(\cdot,t)\|_{L^{1}(\omega)}\mathrm{d}t,
 \ \forall\ \varepsilon>0.
\end{array}
\end{equation}
Here and in what follows, $\chi_{E}$ denotes the characteristic function of $E$.

Since $\ell_{m}-\ell_{m+1}=(\kappa-1)(\ell_{1}-\ell_{0})/\kappa^{m},$ by (\ref{3.2525253}) and (\ref{3.2525251}), we obtain that
\begin{eqnarray*}
\|u(\cdot,\ell_{m})\|&\leq& \varepsilon \|u(\cdot,\ell_{m+2})\|+\frac{1}{|E\cap(\ell_{m+1},\ell_{m})|}
\frac{\mathcal{K}_{10}}{\varepsilon^{\gamma}}
e^{\frac{\mathcal{K}_{9}}{\ell_{m+1}-\ell_{m+2}}}
\int_{\ell_{m+1}}^{\ell_{m}}\chi_{E}\|u(\cdot,t)\|_{L^{1}(\omega)}\mathrm{d}t\\
&\leq&\frac{3\kappa^{m}}{(\ell_{1}-\ell_{0})(\kappa-1)}
\frac{\mathcal{K}_{10}}{\varepsilon^{\gamma}}
e^{\mathcal{K}_{9}\left(\frac{1}{\ell_{1}-\ell_{0}}\frac{\kappa^{m+1}}{\kappa-1}\right)}
\int_{\ell_{m+1}}^{\ell_{m}}\chi_{E}\|u(\cdot,t)\|_{L^{1}(\omega)}\mathrm{d}t+
\varepsilon \|u(\cdot,\ell_{m+2})\|
 \end{eqnarray*}
 for each $\varepsilon>0$.
This yields that
\begin{equation}\label{3.2525254}
\begin{array}{lll}
 \displaystyle{}\|u(\cdot,\ell_{m})\|\leq \displaystyle{}
 \frac{1}{\varepsilon^{\gamma}}\frac{3}{\kappa}
 \frac{\mathcal{K}_{10}}{\mathcal{K}_{9}}
 e^{2\mathcal{K}_{9}\left(\frac{1}{\ell_{1}-\ell_{0}}
 \frac{\kappa^{m+1}}{\kappa-1}\right)}\int_{\ell_{m+1}}^{\ell_{m}}\chi_{E}
 \|u(\cdot,t)\|_{L^{1}(\omega)}\mathrm{d}t\displaystyle{}+
 \varepsilon \|u(\cdot,\ell_{m+2})\|
\end{array}
\end{equation}
for each $\varepsilon>0$.
Denote $\widetilde{d}:=\frac{2\mathcal{K}_{9}}{\kappa(\ell_{1}-\ell_{0})(\kappa-1)}$.
It follows from (\ref{3.2525254}) that
\begin{eqnarray*}
\varepsilon^{\gamma}e^{-\widetilde{d}\kappa^{m+2}}\|u(\cdot,\ell_{m})\|
-\varepsilon^{1+\gamma}e^{-\widetilde{d}\kappa^{m+2}}\|u(\cdot,\ell_{m+2})\|
\leq\frac{3}{\kappa}\frac{\mathcal{K}_{10}}
{\mathcal{K}_{9}}\int_{\ell_{m+1}}^{\ell_{m}}\chi_{E}\|u(\cdot,t)\|_{L^{1}(\omega)}\mathrm{d}t
\end{eqnarray*}
for each $\varepsilon>0$.
Choosing $\varepsilon=e^{-\widetilde{d}\kappa^{m+2}}$ in the latter inequality, we observe that
\begin{equation}\label{3.25252555}
\begin{array}{lll}
 \displaystyle{}e^{-(1+\gamma)\widetilde{d}\kappa^{m+2}}\|u(\cdot,\ell_{m})\|
 -e^{-(2+\gamma)\widetilde{d}\kappa^{m+2}}\|u(\cdot,\ell_{m+2})\|
 \leq\displaystyle{}\frac{3}{\kappa}\frac{\mathcal{K}_{10}}
 {\mathcal{K}_{9}}\int_{\ell_{m+1}}^{\ell_{m}}\chi_{E}\|u(\cdot,t)\|_{L^{1}(\omega)}\mathrm{d}t.
\end{array}
\end{equation}
Take $\kappa:=\sqrt{\frac{\gamma+2}{\gamma+1}}$ in (\ref{3.25252555}). Then we have that
\begin{eqnarray*}
e^{-(2+\gamma)\widetilde{d}\kappa^{m}}\|u(\cdot,\ell_{m})\|
-e^{-(2+\gamma)\widetilde{d}\kappa^{m+2}}\|u(\cdot,\ell_{m+2})\|
\leq \frac{3}{\kappa}\frac{\mathcal{K}_{10}}{\mathcal{K}_{9}}
\int_{\ell_{m+1}}^{\ell_{m}}\chi_{E}\|u(\cdot,t)\|_{L^{1}(\omega)}\mathrm{d}t.
\end{eqnarray*}
Changing $m$ to $2m'$ and summing the above inequality from $m'=1$ to infinity give the desired result. Indeed,
\begin{eqnarray*}
&&e^{-T\left(\|a\|_{\infty}+\frac{\lambda}{2}\|B\|^{2}_{\infty}\right)}e^{-(2+\gamma)\widetilde{d}\kappa^{2}}\|u(\cdot,T)\|\\
&\leq& e^{-(2+\gamma)\widetilde{d}\kappa^{2}}\|u(\cdot,\ell_{2})\|\\
&\leq&\sum_{m'=1}^{+\infty}\left(e^{-(2+\gamma)\widetilde{d}\kappa^{2m'}}\|u(\cdot,\ell_{2m'})\|
-e^{-(2+\gamma)\widetilde{d}\kappa^{2m'+2}}\|u(\cdot,\ell_{2m'+2})\|\right)\\
&\leq& \frac{3}{\kappa}\frac{\mathcal{K}_{10}}{\mathcal{K}_{9}}\sum_{m'=1}^{+\infty}
\int_{\ell_{2m'+1}}^{\ell_{2m'}}\chi_{E}\|u(\cdot,t)\|_{L^{1}(\omega)}\mathrm{d}t\\
&\leq& \frac{3}{\kappa}\frac{\mathcal{K}_{10}}
{\mathcal{K}_{9}}\int_{0}^{T}\chi_{E}\|u(\cdot,t)\|_{L^{1}(\omega)}\mathrm{d}t.
 \end{eqnarray*}

In summary, we finish the proof of Theorem~\ref{Thm1}.
\qed

\section{Appendix}\label{app}
\noindent\textbf{Proof\ of\ Lemma~\ref{lemma-1.1}}. Multiplying the first equation of \eqref{1.1} by $u$ and integrating it over $\Omega\times(0,t)$ (with any $t\in (0,T]$), we have that
$$\int_{\Omega}|u(x,t)|^{2}\mathrm{d}x-\int_{\Omega}|u(x,0)|^{2}\mathrm{d}x+2\int_{0}^{t}\int_{\Omega}A\nabla u\cdot\nabla u\mathrm{d}x\mathrm{d}s=-2\int_{0}^{t}\int_{\Omega}B\cdot\nabla u u\mathrm{d}x\mathrm{d}s-2\int_{0}^{t}\int_{\Omega}a u^{2}\mathrm{d}x\mathrm{d}s.$$
By (\ref{yu-11-28-2}) and Young's inequality, we get that

\begin{eqnarray*}
&&\int_{\Omega}|u(x,t)|^{2}\mathrm{d}x+2\lambda^{-1}\int_{0}^{t}\int_{\Omega}|\nabla u|^{2}\mathrm{d}x\mathrm{d}s\\
&\leq&\int_{\Omega}|u(x,0)|^{2}\mathrm{d}x+2\|B\|_{\infty}\int_{0}^{t}\int_{\Omega}|\nabla u||u|\mathrm{d}x\mathrm{d}s+2\|a\|_{\infty}\int_{0}^{t}\int_{\Omega}|u|^{2}\mathrm{d}x\mathrm{d}s\\
&\leq&\int_{\Omega}|u(x,0)|^{2}\mathrm{d}x+\lambda^{-1}\int_{0}^{t}\int_{\Omega}|\nabla u|^{2}\mathrm{d}x\mathrm{d}s+\left(2\|a\|_{\infty}+\lambda\|B\|^{2}_{\infty}\right)\int_{0}^{t}\int_{\Omega}|u|^{2}\mathrm{d}x\mathrm{d}s,
\end{eqnarray*}
which indicates
\begin{eqnarray*}
&&\int_{\Omega}|u(x,t)|^{2}\mathrm{d}x+\lambda^{-1}\int_{0}^{t}\int_{\Omega}|\nabla u|^{2}\mathrm{d}x\mathrm{d}s\\
&\leq&\int_{\Omega}|u(x,0)|^{2}\mathrm{d}x+\left(2\|a\|_{\infty}+\lambda\|B\|^{2}_{\infty}\right)\int_{0}^{t}\int_{\Omega}|u|^{2}\mathrm{d}x\mathrm{d}s\\
&\leq&\int_{\Omega}|u(x,0)|^{2}\mathrm{d}x+\left(2\|a\|_{\infty}+\lambda\|B\|^{2}_{\infty}\right)\int_{0}^{t}\left(\int_{\Omega}|u(x,s)|^{2}\mathrm{d}x+\lambda^{-1}\int_{0}^{s}\int_{\Omega}|\nabla u(x,\tau)|^{2}\mathrm{d}x\mathrm{d}\tau\right)\mathrm{d}s.
\end{eqnarray*}
This, along with Gronwall's inequality, implies  \eqref{lemma-1.1}.\qed
\\

\noindent\textbf{Proof\ of\ Lemma~\ref{lemma-1.2}}. Multiplying the first equation of \eqref{1.1} by  $-t \mathrm{div}(A\nabla u)$ and integrating it over $\Omega\times(0,t)$, we obtain that
\begin{eqnarray*}
&&t\int_{\Omega}A\nabla u(x,t)\cdot\nabla u(x,t)\mathrm{d}x+2\int_{0}^{t}\int_{\Omega}s[\mathrm{div}(A\nabla u)]^{2}\mathrm{d}x\mathrm{d}s-\int_{0}^{t}\int_{\Omega}A\nabla u\cdot\nabla u\mathrm{d}x\mathrm{d}s\\
&=&2\int_{0}^{t}\int_{\Omega}s B\cdot\nabla u\mathrm{div}(A\nabla u)\mathrm{d}x\mathrm{d}s+2\int_{0}^{t}\int_{\Omega}s au\mathrm{div}(A\nabla u)\mathrm{d}x\mathrm{d}s.
\end{eqnarray*}
This, along with Young's inequality, implies that

\begin{eqnarray}\label{1.222222111}
&&t\int_{\Omega}A\nabla u(x,t)\cdot\nabla u(x,t)\mathrm{d}x-\int_{0}^{t}\int_{\Omega}A\nabla u\cdot\nabla u\mathrm{d}x\mathrm{d}s\nonumber\\
&\leq&\|B\|^{2}_{\infty}\int_{0}^{t}\int_{\Omega}s |\nabla u|^{2}\mathrm{d}x\mathrm{d}s+\|a\|^{2}_{\infty}\int_{0}^{t}\int_{\Omega}s |u|^{2}\mathrm{d}x\mathrm{d}s.
\end{eqnarray}
By (\ref{yu-11-28-2}) and (\ref{1.2}), we get that

\begin{eqnarray}\label{1.222222}
\int_{0}^{t}\int_{\Omega}A\nabla u\cdot\nabla u\mathrm{d}x\mathrm{d}s\leq \lambda^{2} e^{t\left(2\|a\|_{\infty}+\lambda\|B\|^{2}_{\infty}\right)}\int_{\Omega}|u(x,0)|^{2}\mathrm{d}x
\end{eqnarray}
and
\begin{equation}\label{1.222223}
\begin{array}{lll}
\displaystyle\|a\|^{2}_{\infty}\int_{0}^{t}\int_{\Omega}s |u|^{2}\mathrm{d}x\mathrm{d}s
&\leq&\displaystyle \|a\|^{2}_{\infty}\int_{0}^{t}se^{s\left(2\|a\|_{\infty}+\lambda\|B\|^{2}_{\infty}\right)}\mathrm{d}s\int_{\Omega}|u(x,0)|^{2}\mathrm{d}x\\
&\leq&\displaystyle \|a\|_{\infty}\int_{0}^{t}e^{s\left(3\|a\|_{\infty}+\lambda\|B\|^{2}_{\infty}\right)}\mathrm{d}s\int_{\Omega}|u(x,0)|^{2}\mathrm{d}x\\
&\leq&\displaystyle e^{t\left(3\|a\|_{\infty}+\lambda\|B\|^{2}_{\infty}\right)}\int_{\Omega}|u(x,0)|^{2}\mathrm{d}x.
\end{array}
\end{equation}
It follows from (\ref{yu-11-28-2}) and (\ref{1.222222111})-(\ref{1.222223}) that
\begin{eqnarray*}
t\int_{\Omega}|\nabla u(x,t)|^{2}\mathrm{d}x\leq
2\lambda^{3} e^{t(3\|a\|_{\infty}+\lambda\|B\|^{2}_{\infty})}\int_{\Omega}|u(x,0)|^{2}\mathrm{d}x+\lambda\|B\|^{2}_{\infty}\int_{0}^{t}s\int_{\Omega} |\nabla u|^{2}\mathrm{d}x\mathrm{d}s,
\end{eqnarray*}
which, combined with Gronwall's inequality, indicates (\ref{1.3}).\qed

\end{document}